\numberwithin{equation}{section}
\newcommand{\GUE}{\mathrm{GUE}}
\newcommand{\GOE}{\mathrm{GOE}}
\newcommand{\Or}{\mathcal{O}}
\newcommand{\Pb}{\mathbb{P}}
\newcommand{\E}{\mathbbm{E}}
\newcommand{\e}{\varepsilon}
\newcommand{\R}{\mathbb{R}}
\newcommand{\N}{\mathbb{N}}
\newcommand{\Z}{\mathbb{Z}}
\DeclareMathOperator\argmax{argmax}
\newtheorem{prop}{Proposition}[section]
\newtheorem{thm}[prop]{Theorem}
\newtheorem{lem}[prop]{Lemma}
\newtheorem{cor}[prop]{Corollary}
\newtheorem{cla}[prop]{Claim}
\newtheorem{rem}[prop]{Remark}
\title{Limit law of a second class particle in TASEP with non-random initial condition}
\author{P.L. Ferrari\thanks{Institute for Applied Mathematics, Bonn University, Endenicher Allee 60, 53115 Bonn, Germany. E-mail: {\tt ferrari@uni-bonn.de}}
\and P. Ghosal\thanks{Columbia University, Department of Statistics, 1255 Amsterdam Avenue, New York, NY 10027, USA. E-mail: {\tt pg2475@columbia.edu}}
\and P. Nejjar\thanks{IST Austria, Am Campus 1, 3400 Klosterneuburg, Austria.
 Partially supported by ERC Advanced Grant No. 338804 and ERC Starting Grant No. 716117, E-mail: {\tt peter.nejjar@ist.ac.at}}}
\date{May 22, 2018}
\begin{document}
\sloppy
\maketitle

\begin{abstract}
We consider the totally asymmetric simple exclusion process (TASEP) with non-random initial condition and  density $\lambda$ on $\Z_-$ and $\rho$ on $\Z_+$,
and a second class particle initially at the origin. For $\lambda<\rho$, there is a shock and the second class particle moves with speed $1-\lambda-\rho$.
For large time $t$, we show that the position of the second class particle fluctuates on the  $t^{1/3}$ scale and determine its limiting law. We also obtain
the limiting distribution of the number of steps made by the second class particle until time $t$.
\end{abstract}



\section{Introduction and main result}\label{sectIntro}
The totally asymmetric simple exclusion process (TASEP) is one of  the simplest non-reversible interacting particle systems on $\mathbb{Z}$ lattice. A TASEP particle configuration is described by the occupation variables $\{\eta_j\}_{j\in \mathbb{Z}}$ where $\eta_j=1$ means site $j$ is occupied by a particle and $\eta_j=0$ means site $j$ is empty. The TASEP dynamics  are  as follows.  Particles jump one step to the right and are allowed to do so only if their right neighboring site is empty. Jumps are independent of each other and are performed after an exponential waiting time with mean $1$, which starts from the time instant when the right neighbor site is empty.

More precisely, denoting by $\eta\in {\cal S}=\{0,1\}^\Z$ a configuration, the infinitesimal generator of TASEP is given by the closure of the operator $L$
\begin{equation}
L f(\eta)=\sum_{j\in\Z}\eta_j (1-\eta_{j+1})(f(\eta^{j,j+1})-f(\eta)),
\end{equation}
where $f:{\cal S}\to\R$ is any function depending on finitely many $\eta_j$'s, and $\eta^{j,j+1}$ is the configuration $\eta$ with the occupation variables at sites $j$ and $j+1$ interchanged. The semigroup $e^{L t}$ is well-defined as acting on bounded and continuous functions on ${\cal S}$, see~\cite{Li85b} for further details on the construction. Since particles can not overtake each other, we can associate a labeling to them and denote the position of particle $k$ at time $t$ by $x_k(t)$. We choose the right-to-left ordering, namely, we have $x_{k+1}(0)< x_k(0)$ for any $k$.

The observable we study in this paper is the so-called \emph{second class particle}. The only difference between a particle described above (also called first class particle) with a second class particle is the following: when a first class particle tries to jump on a site occupied by a second class particle, the jump is not suppressed and the two particles interchanges their positions. Second class particles can be also seen as discrepancies between two TASEP systems coupled by the basic coupling, see e.g.~\cite{Li99}, starting from p.218, for further  details. For instance, consider two TASEP configurations $\eta, \eta^{\prime}\in {\cal S}$ which at time $0$ differ only  at the site $0$. Then under basic coupling the two configurations will differ only at one site for any time $t\geq 0$, and we can define
\begin{equation}\label{def2ndclass}
X_t= \sum_{x \in \Z}x \mathbbm{1}_{\{\eta_t (x) \neq \eta^{\prime}_t (x)\}}.
\end{equation}
to be the position of the second class particle at time $t$.
 From this perspective, the distribution of $X_t$ gives an information on the correlation between $\eta_j(t)$ and $\eta_0(0)$. For the case of the stationary initial condition, where every site is independently occupied with probability $\varrho$, the law of the second class particle is precisely proportional to ${\rm Cov}(\eta_j(t),\eta_0(0))$, see e.g.~\cite{PS01}.

Second class particle are also very useful when the interacting system generates shocks, which are discontinuities in the particle density. Indeed, the position of the shock can be identified with $X_t$, see Chapter~3 of~\cite{Li99}. For that reason already several studies are devoted to the second class particles. For instance, consider a single second class particle starting at the origin. For TASEP initial condition is Bernoulli product measure with density $\lambda$ on $\Z_-=\{\ldots,-3,-2,-1\}$ and density $\rho$ on $\Z_+=\{1,2,3,\ldots\}$ with $\lambda<\rho$. Then $X_t$ moves with an average speed of $1-\rho-\lambda$ and it has Gaussian fluctuations on a $t^{1/2}$ scale~\cite{Fer90,FF94b,PG90,FMP09}.  For shocks created  by deterministic initial data,~\cite{FN13}  obtained the limit law of a first class particle located at the shock. Later, multipoint distributions of several particles at the shock where obtained~\cite{FN16}, and the transition to shock fluctuations was studied~\cite{N17}. For the case $\lambda<\rho$, what remained open is the limit law of the second class particle $X_t$, and this is what we obtain in this paper.

When the initial condition is product Bernoulli measure with $\lambda$ on $\mathbb{Z}_{-}$ and $\rho$ on $\mathbb{Z}_{+}$ with $\lambda> \rho$, a second class particle placed
initially at the origin chooses one direction uniformly among its characteristics and then moves at a constant speed along that direction~\cite{FK95,MG05}. This means that the
fluctuations of the second class particle are of order $t$. Tightly related with the trajectory of the second class particle is the notion of competition interface in a last
passage percolation model introduced in~\cite{FP05B} for $\lambda=1$ and $\rho=0$. This connection was later extended for more general cases in~\cite{FMP09}. They prove that
for any initial conditions with asymptotically density $\lambda<\rho$ to the left and right of the origin respectively, a second class particle moves along a characteristics,
chosen according to a random distribution which depends on the initial condition. However only for the Bernoulli initial condition is the precise distribution known.

For constant density results on the fluctuations of the second class particle are known only for the stationary case. Using coupling arguments it was shown in~\cite{BS08} that
the fluctuation scale of the second class particle is $t^{2/3}$ (by controlling some moments). The exact limiting distribution has been obtained in~\cite{FS05a}.

\subsubsection*{Main result}
In this paper we also consider TASEP with initial density $\lambda$ on $\Z_-$ and $\rho$ on $\Z_+$, $\lambda<\rho$, but this time \emph{without initial randomness}. More precisely, we start with TASEP particles at
\begin{equation}\label{eq:InitialPosition}
 \begin{aligned}
 x_{n}(0)&=-\left\lfloor \lambda^{-1} n\right\rfloor&\quad \text{when }n>0,\\
 x_{n}(0)&=-\left\lfloor \rho^{-1} n \right\rfloor& \quad \text{when }n<0,
 \end{aligned}
\end{equation}
and one second class particle at the origin, $X_0=0$. Our main result, Theorem~\ref{MainTheoremForDifferentDensity}, concerns the joint limiting distribution of $X_t$ and of $N_t$, where the latter being the number of steps made by the second class particle until time $t$.
\begin{thm}\label{MainTheoremForDifferentDensity}
Consider TASEP with initial condition (\ref{eq:InitialPosition}), where  $0<\lambda<\rho<1, $ and  a  second class particle  at $0$ at time $t=0$. Let  $X_t$ be the second class particle's position at time $t$, and let $N_t$ be the number of steps it made up to time $t$.
Then, we have convergence in distribution
\begin{equation}\label{eq:ConvJointLaw}
\left(\frac{X_t- \mathbf{v}t }{t^{1/3}},\frac{N_t- 2\mu_0^{-1} t}{t^{1/3}}\right)\Rightarrow (\mathcal{X}, \mathcal{N})
 \end{equation}
as $t\to\infty$, where
 \begin{equation}\label{eq:SecondClassDistribution}
 \mathcal{X}=\frac{2^{1/3}}{\mu^{4/3}_0\Upsilon} \left(\frac{\sigma_1\xi_{\rm GOE}^{(1)}}{\rho(1-\rho)}-\frac{\sigma_2\xi_{\rm GOE}^{(2)}}{\lambda(1-\lambda)}\right),
 \end{equation}
 and
 \begin{equation}\label{eq:NumberoOfJumpsDistribution}
 \mathcal{N}= \frac{2^{1/3}}{\mu^{4/3}_0\Upsilon} \left(\frac{\sigma_1(1-2\rho)\xi_{\rm GOE}^{(1)}}{\rho(1-\rho)}-\frac{\sigma_2(1-2\lambda)\xi_{\rm GOE}^{(2)}}{\lambda(1-\lambda)}\right).
 \end{equation}
Here, $\xi_{\rm GOE}^{(1)}$ and $\xi_{\rm GOE}^{(2)}$ are two \emph{independent} GOE Tracy-Widom distributed random variables~\cite{TW96}, and the constants are given by
 \begin{equation}\label{const1}
 \mathbf{v}:=1-\lambda-\rho,\quad\mu_0:= \frac{2}{1-\lambda-\rho+2\lambda\rho}, \quad\Upsilon:= \frac{1-2\lambda}{\lambda(1-\lambda)} - \frac{1-2\rho}{\rho(1-\rho)},
 \end{equation}
 and
 \begin{equation}\label{const2}\sigma_1:= \frac{2^{1/3}}{\left(\lambda(1-\lambda)(1-\lambda-\rho+2\lambda\rho)\right)^{1/3}},
 \quad \sigma_2:= \frac{2^{1/3}}{\left(\rho(1-\rho)(1-\lambda-\rho+2\lambda\rho)\right)^{1/3}}.
\end{equation}
\end{thm}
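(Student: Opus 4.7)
The plan is to identify $X_{t}$ and $N_{t}$ with functionals of two essentially independent flat-initial-condition TASEPs, apply the known GOE Tracy--Widom limits on each side, and combine via an asymptotic independence argument. First I would introduce two TASEPs $\eta^{+}$ and $\eta^{-}$, obtained from the configuration \eqref{eq:InitialPosition} by adding, respectively, a particle or a hole at the origin, and couple them by the basic coupling. The position $X_{t}$ coincides with the unique discrepancy site between $\eta^{+}_{t}$ and $\eta^{-}_{t}$, and the labeled particles of $\eta^{+}$ and $\eta^{-}$ agree outside a single label shift. As in~\cite{FN13,FN16,N17}, this produces a deterministic identity expressing $X_{t}$ in terms of specific labeled-particle positions on the two sides of the origin in the coupled system. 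Since each step of the second class particle is either a right-jump on an empty site or a swap-induced left-jump, the counter $N_{t}$ is the sum of two currents in the coupled system and hence can also be written in terms of the same labeled-particle positions.

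Each half of the initial condition \eqref{eq:InitialPosition} is a flat TASEP (essentially periodic with period $\lfloor 1/\rho\rfloor$ on $\Z_{+}$ and $\lfloor 1/\lambda\rfloor$ on $\Z_{-}$). After mapping to exponential last-passage percolation and picking the particle labels singled out by the identity above on each side, the relevant rescaled particle positions converge to GOE Tracy--Widom by the standard flat-TASEP one-point limit, with the KPZ scaling constants $\sigma_{1}$ for the $\rho$-side and $\sigma_{2}$ for the $\lambda$-side arising from the usual formulas.

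The main obstacle is to promote these two one-point limits to a \emph{joint} limit in which $\xi_{\rm GOE}^{(1)}$ and $\xi_{\rm GOE}^{(2)}$ are \emph{independent}. Heuristically, the two relevant LPP geodesics are rooted on opposite sides of the origin and, on the macroscopic scale, traverse spatially disjoint regions of the grid; their transversal fluctuations are of order $t^{2/3}$, which is much smaller than the macroscopic separation between the two regions. I would make this rigorous by truncating the initial data outside a suitable neighbourhood on each side, showing via transversal-fluctuation and slow-decorrelation estimates that this truncation perturbs the relevant last-passage times only by $o(t^{1/3})$, and noting that the truncated left- and right-side LPP models are then literally independent since they depend on disjoint families of exponential weights. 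This decoupling step is the analog of the asymptotic-independence arguments used in~\cite{FN13,FN16,N17}.

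Finally, I would linearise the identity of the first paragraph around the macroscopic meeting point. To leading order, the balance between the two sides at the shock is a linear equation in the fluctuations, whose inversion produces the Rankine--Hugoniot type factor $\Upsilon$ in the denominator of both~\eqref{eq:SecondClassDistribution} and~\eqref{eq:NumberoOfJumpsDistribution}, the coefficients $\sigma_{1}/[\rho(1-\rho)]$ and $\sigma_{2}/[\lambda(1-\lambda)]$ in the numerator of~\eqref{eq:SecondClassDistribution}, and the minus sign between the two GOE terms (which reflects the opposite signs with which left and right fluctuations enter the shock balance). For $N_{t}$, the same linearisation applied to the jump counter rather than to the displacement picks up the characteristic speeds $1-2\rho$ and $1-2\lambda$ as extra multiplicative factors on the two sides, accounting for the difference between~\eqref{eq:NumberoOfJumpsDistribution} and~\eqref{eq:SecondClassDistribution}. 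The continuous mapping theorem applied to the independent GOE limits, combined with this linear functional, then yields the joint convergence~\eqref{eq:ConvJointLaw}.
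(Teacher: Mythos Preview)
Your high-level strategy---GOE limits from each flat half, asymptotic independence via slow decorrelation and geodesic localization, then a linear combination at the shock---matches the paper's. But two steps contain a genuine gap.

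First, the claimed ``deterministic identity expressing $X_t$ in terms of specific labeled-particle positions'' does not exist in the form you describe. The link to LPP goes through the competition interface $\phi_n=(I_n,J_n)$: one has $X_t=I(t)-J(t)-1$ with $(I(t),J(t))=\phi_n$ for $t\in[\tau_n,\tau_{n+1})$, where $\tau_n$ is itself an LPP time. What one obtains is a \emph{comparison} of the type $\{L_{\mathcal{L}_\rho\to P(uN^{1/3})}\ge L_{\mathcal{L}_\lambda\to P(uN^{1/3})}\}$, not a closed formula in fixed-label particle positions.

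Second, and more seriously, your ``linearisation'' hides the main difficulty the paper isolates. The competition interface is indexed by step number, while $(X_t,N_t)$ is observed at a fixed time $t$. At time $t$ the interface has made a \emph{random} number of steps differing from the macroscopic value $2N=2t/\mu_0$ by order $N^{1/3}$, so $(I(t),J(t))$ lands at a random point $\bar P(\widetilde{\mathcal U},\mathcal V)$ with $\widetilde{\mathcal U},\mathcal V$ of order~$1$. To replace the LPP values at this random endpoint by their values at the deterministic point $\bar P(0,0)$---which your linearisation implicitly does---you must prove that the rescaled processes $L^{\mathrm{resc},\lambda}_N(u,v)$ and $L^{\mathrm{resc},\rho}_N(u,v)$ are constant to order $o(1)$ uniformly over $|u|,|v|\le C$, in both the space-like variable $u$ \emph{and} the time-like variable $v$. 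This local-constancy statement is the central technical contribution of the paper (Propositions~\ref{Tightness} and~\ref{ConcBoundOnMaximum}, Corollary~\ref{UsefulCorBis}), proved by sandwiching with stationary LPP at densities $\lambda\pm rN^{-1/3}$ together with Gaussian exit-point bounds. The one-point GOE limits and asymptotic independence you describe are exactly Proposition~\ref{ComponentConvergenceProp}, but they do not by themselves yield the joint law of $(X_t,N_t)$: without the local-constancy input the random time change cannot be undone, and your linear system for the fluctuations does not close. The paper explicitly flags this as the obstacle beyond~\cite{FN16}, and your proposal does not address it.
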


One aspect of Theorem~\ref{MainTheoremForDifferentDensity} is the form of the limiting distribution. It is a linear combination of two independent random variables. This is directly related to the fact that  the shock is the location where two characteristic lines meet. Secondly, the $t^{1/3}$ scale is indirectly related with the fact that TASEP is in the Kardar-Parisi-Zhang (KPZ) universality class~\cite{KPZ86} in $1+1$ dimensions (see surveys and lecture notes~\cite{FS10,Cor11,QS15,BG12,Qua11,Fer10b}). Finally the GOE Tracy-Widom distribution is due to fact that the regions from where the two characteristic lines starts are have fixed densities and they are non-random (such situations are called flat initial conditions for KPZ models~\cite{PS00,Fer04}, see Proposition~\ref{ComponentConvergenceProp}).

Further, Theorem~\ref{MainTheoremForDifferentDensity} indeed shows that that the $t^{1/2}$ fluctuation of $X_t$ (and the Gaussian distribution) observed with Bernoulli-Bernoulli initial condition strongly depends on the randomness of the initial condition. The reason is that the fluctuations generated by the dynamics (of KPZ type) are not visible as they are much smaller than the Gaussian fluctuations of the initial condition of the two regions which are strongly correlated with the shock. Finally, notice that when $\lambda-\rho\to 0$, the constant  $\Upsilon$ in \eqref{const1} converges to $0$. This is not surprising as a similar phenomenon has been observed for the random initial condition, $\lambda-\rho\to 0$ corresponds to look at the stationary initial condition, where the position of the second class particle has a non-trivial limiting distribution in the $t^{2/3}$ scale~\cite{FS05a,PS01}.

On the way to  Theorem~\ref{MainTheoremForDifferentDensity}, we determine a technical result which, however, has its own interest. In Lemma~\ref{AlternativeExitPointLemma} we find a Gaussian bound for the tail of the distribution of the starting point (or, exit point) of a geodesic in the stationary model of last passage
percolation (defined in \eqref{eq:LastPassageTimeFromLHalf}) from a line to a point.

\subsubsection*{Method and outline}
The result of the paper uses as first ingredient the asymptotic independence of the last passage times from two disjoint initial set of points of a last passage percolation (LPP) model~\cite{FN13}, see Proposition~\ref{ComponentConvergenceProp}, which is a consequence of slow-decorrelation~\cite{Fer08,CFP10b}. The trajectory of the second class particle is the same as the competition interface in LPP introduced by Ferrari and Pimentel in~\cite{FP05B}. It is however not straightforward to extend the result on the competition interface of Ferrari and Nejjar, Theorem~2.1 of~\cite{FN16}, to the position of the second class particle. The reason being that between the two observables there is a random time change and the second class particle moves with non-zero speed.

The next key ingredient in the proof is a tightness-type result on the two LPP problems, see Proposition~\ref{Tightness} and Corollary~\ref{UsefulCorBis}. This extends to general densities the method of Pimentel~\cite{Pi17}, which extends the comparison with the stationary process by Cator and Pimentel~\cite{CP15b}, see Lemma~\ref{CouplingLemma}. For its application one has to estimate the tail probabilities of the exit points, see Lemma~\ref{AlternativeExitPointLemma} and Lemma~\ref{ExitPointOfGeneralPercolation}. The latter as well as the limiting GOE Tracy-Widom distributions are taken from the recent universality result for flat initial condition by Ferrari and Occelli~\cite{FO17v1}.

The rest of the paper is organized as follows. In Section~\ref{sectTASEPandLPP} we recall the connection between TASEP and LPP as well between second class particle and competition interface. We then collect asymptotic result on LPP which are used later. Section~\ref{SectProofMainResult} contains the proof of Theorem~\ref{MainTheoremForDifferentDensity}. It is based of preliminary results on the control of LPP at different points, whose proof is presented after the one of the main theorem.

\subsubsection*{Acknowledgments}
The authors are grateful to Leandro Pimentel for discussions on his preprint~\cite{Pi17}. P.L.~Ferrari is supported by the German Research Foundation via the SFB 1060--B04 project. P. Ghosal was partially funded by NSF DMS:1637087 during his visit to the workshop "Quantum Integrable systems, Conformal Field Theories and Stochastic Processes"  held at the \emph{Institut d'Etudes Scientifiques de Carg\`ese} where this work has been initiated. P. Ghosal was also partially funded by the grant NSF DMS:1664650 during the preparation of this paper. P. Ghosal also wishes to thank Ivan Corwin, Jeremy Quastel, Mustazee Rahman for helpful conversations. P. Nejjar equally thanks the organizers of the aforementioned workshop in Carg\`ese.

\section{Some results on TASEP and LPP}\label{sectTASEPandLPP}
Here we first recall the connection between TASEP and LPP, then between the second class particle and the competition interface, and finally we collect some known result about LPP.

\subsection{Last passage percolation and connection with TASEP}\label{Sec2Sub1}
We consider last passage percolation (LPP) on $\Z^2$. To define it, let $\mathcal{L}_S ,\mathcal{L}_E\subset \Z^2$. An up-right path $\pi$ from $\mathcal{L}_S$ to $\mathcal{L}_E$ is a sequence of points $\pi=(\pi(0),\ldots,\pi(n))$ in $\Z^{2}$ such that
 $\pi(0)\in \mathcal{L}_{S},\pi(n)\in \mathcal{L}_{E}$ and $\pi(i)-\pi(i-1)\in \{(0,1),(1,0)\},i=1,\ldots,n.$ Consider a family of independent non-negative random variables $\{\omega_{i,j}\}_{i,j\in \Z}.$
 Then the last passage percolation time from $\mathcal{L}_S$ to $\mathcal{L}_E$ is defined as
\begin{align}\label{eq:LPPdef}
L_{ \mathcal{L}_S \to \mathcal{L}_E}=\max_{\pi: \mathcal{L}_S\to \mathcal{L}_E}\sum_{(i,j)\in \pi\setminus \mathcal{L}_S}\omega_{i,j},
\end{align}
where the maximum is taken over all up-right paths from $\mathcal{L}_S$ to $\mathcal{L}_E$. In case there are infinitely many or zero such paths, we set, say, $L_{ \mathcal{L}_S \to \mathcal{L}_E}=-\infty$. We denote by $\pi^{\mathrm{max}}$ a path for which the maximum in \eqref{eq:LPPdef} is attained; as we will consider continuous $\omega_{i,j}$, $\pi^{\mathrm{max}}$ will be a.s. unique.

Let $\{x_{n}(0),n \in I\}, I \subset \Z$ be a TASEP initial data and consider the line
\begin{equation} \label{ell}
{\cal L}:= \{(k+x_k(0),k): k\in I\}.
\end{equation}
Let furthermore $\{\omega_{i,j}\}_{(i,j) \in \Z\times I}$ be independent and distributed as
\begin{align}\label{eq:RandomWeights}
\omega_{i,j}\sim \mbox{Exp}(v_j)
\end{align}
where $v_j$ is the parameter for the exponential waiting time of particle number $j.$
Then, for any integer $\ell$, the link between TASEP and the last passage percolation is given by
\begin{align}\label{eq:LPP&TASEPcoupling}
\Pb\left(\bigcap_{k=1}^\ell \{L_{{\cal L}\to (m_k,n_k)}\leq t\}\right)=\Pb\left(\bigcap_{k=1}^\ell \{x_{n_k}(t)\geq m_k-n_k\}\right).
\end{align}

\subsection{Competition interface and second class particles}
Given an initial data $\{x_{n}(0),n \in \Z\}$ we consider separately the sets
\begin{equation}\label{eq:HalfLines}
\mathcal{L}^{+}=\{(k+x_{k}(0),k):k>0\} \quad \mathcal{L}^{-}=\{(k+x_{k}(0),k):k\leq 0\}.
\end{equation}
We assume that
$x_{0}(0)=0$ and $x_{1}(0)<-1$, which guarantees that $L_{\mathcal{L}^{+}\to (m,n)}\neq L_{\mathcal{L}^{-}\to (m,n)}$ for all $(m,n) \in \Z_{\geq 1}^{2}.$ We consider the two clusters
\begin{equation}
\begin{aligned}
&\Gamma_{+}^{\infty}:=\{(i,j)\in \Z^{2}:L_{\mathcal{L}^{+}\to (i,j)}>L_{\mathcal{L}^{-}\to (i,j)}\},\\
& \Gamma_{-}^{\infty}:=\{(i,j)\in \Z^{2}:L_{\mathcal{L}^{-}\to (i,j)}>L_{\mathcal{L}^{+}\to (i,j)}\}.
\end{aligned}
\end{equation}
Note that by assumption for each $(m,n)\in \Z_{\geq 1}^{2}$ we have $(m,n) \in \Gamma_{+}^{\infty}\cup \Gamma_{-}^{\infty}$ almost surely.
These two clusters are separated through the competition interface $\phi=(\phi_0,\phi_1,\phi_2,\ldots),$ defined by setting $\phi_0 =(0,0)$ and
\begin{equation}\label{compint}
 \phi_{n+1}=\begin{cases}
 \phi_{n}+(1,0) \quad \mathrm{ if }\quad \phi_{n}+(1,1) \in \Gamma_{+}^{\infty},\\
 \phi_{n}+(0,1) \quad \mathrm{ if }\quad \phi_{n}+(1,1) \in \Gamma_{-}^{\infty},
 \end{cases}
\end{equation}
We write $\phi_{n}=(I_n,J_n),n \geq 0,$ by definition $I_n + J_n=n$. In our construction, we always have $\phi_1 =(1,0)$.
Now, in~\cite{FP05B} a coupling was introduced between the competition interface and the second class particle, see~\cite{FMP09}, Proposition 2.2, for the statement for general TASEP initial data. In the construction of~\cite{FMP09}, TASEP with a second class particle initially at the origin is seen to be equivalent to TASEP without a second class particle, but an extra site, and initially a hole at the origin and a particle at site 1. Associating to this initial data the sets \eqref{ell}, \eqref{eq:HalfLines} and the competition interface, $I_n - J_n -1$ becomes the position of the second class particle after its $(n-1)\mathrm{th}$ step, i.e. by setting $\tau_{n}=L_{\mathcal{L} \to \phi_n}$ and defining
\begin{equation}
(I(t),J(t))=\phi_n ,\quad t \in [\tau_n, \tau_{n+1})
\end{equation}
the coupling of~\cite{FMP09} makes that
\begin{equation}\label{eq:Corr2ndComp}
(I(t)-J(t)-1)_{t \geq 0} = (X_t)_{t \geq 0}
\end{equation}
 (the shift by $-1$ is not present in~\cite{FMP09}, due to slightly different conventions).

\subsection{Some asymptotic results for LPP}\label{Sec2Sub2}
We consider i.i.d.\ weights $\{\omega_{i,j}\}_{i,j\in \Z}$ with $\omega_{i,j } \sim \mbox{Exp(1)}$, where
 $\mbox{Exp}(1)$ denotes the exponential distribution with intensity $1$.
The basic result about convergence of the point-to-point LPP time is the following.
\begin{prop}[Theorem~1.6 of~\cite{Jo00b}]\label{propPtToPt}
Let $0<\eta<\infty$. Then,
\begin{align}\label{eq:LPPPointToPoint}
\lim_{N\to \infty} \Pb\left(L_{0\to (\lfloor \eta N\rfloor, \lfloor N\rfloor}\leq \mu_{pp}N+s \sigma_{\eta} N^{1/3}\right) = F_\GUE (s)
\end{align}
where $\mu_{pp} = (1+\sqrt{\eta})^2$, and $\sigma_{\eta} = \eta^{-1/6}(1+\sqrt{\eta})^{4/3}$, and $F_\GUE$ is the GUE Tracy-Widom distribution function.
 \end{prop}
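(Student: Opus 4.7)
Since this is stated as Theorem~1.6 of~\cite{Jo00b}, I would outline the classical determinantal approach one would take. The first step is to invoke the RSK correspondence (or, equivalently, the Lindstr\"om--Gessel--Viennot identification of LPP times with non-intersecting path ensembles), which identifies the law of $L_{0 \to (M,N)}$ with i.i.d.\ $\mathrm{Exp}(1)$ weights with that of the largest eigenvalue of an $N\times N$ Laguerre Unitary Ensemble with shape parameter $M=\lfloor \eta N\rfloor$. The joint density of the eigenvalues is then an explicit Vandermonde-squared factor times a product of gamma weights, and consequently $\Pb(L_{0 \to (M,N)}\leq s)$ admits a Fredholm determinant representation $\det(I-K_N)_{L^2(s,\infty)}$, where $K_N$ is the Christoffel--Darboux kernel associated with the Laguerre polynomials.

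The second, and main, step is a steepest-descent analysis of $K_N$ near its soft upper edge. The limiting empirical spectral distribution of the suitably rescaled LUE is the Marchenko--Pastur law on $[(1-\sqrt{\eta})^2, (1+\sqrt{\eta})^2]$, whose upper endpoint pins down the centering $\mu_{pp}=(1+\sqrt{\eta})^2$. The square-root vanishing of this density at the edge dictates a fluctuation scale of order $N^{1/3}$; expanding the double-contour integral representation of $K_N$ around the saddle located at the edge shows that, after the change of variables $x=\mu_{pp}N+u\,\sigma_\eta N^{1/3}$, the kernel converges uniformly on compact sets to the Airy kernel $K_{\Ai}(u,v)=\int_0^\infty \Ai(u+r)\Ai(v+r)\,\D r$. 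Keeping track of the Jacobian of this expansion fixes the scaling constant $\sigma_\eta=\eta^{-1/6}(1+\sqrt{\eta})^{4/3}$. Together with standard Gaussian/exponential tail bounds on $K_N$, this allows the pointwise limit to be exchanged with the Fredholm determinant to yield $\Pb(L \leq \mu_{pp}N+s\sigma_\eta N^{1/3}) \to \det(I-K_{\Ai})_{L^2(s,\infty)}=F_{\GUE}(s)$.

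The principal technical obstacle is precisely this uniform asymptotic analysis: one must control contributions from the portions of the steepest-descent contours away from the critical point, and upgrade the pointwise kernel convergence to the trace-class convergence required to justify passage to the limit inside the Fredholm determinant. All of these estimates are carried out in detail in~\cite{Jo00b} for i.i.d.\ $\mathrm{Exp}(1)$ weights, and the statement above is quoted from there verbatim.
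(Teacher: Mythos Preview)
Your outline is correct and faithfully sketches Johansson's original argument: the RSK/LUE identification, the Fredholm determinant with the Laguerre Christoffel--Darboux kernel, and the steepest-descent edge analysis yielding the Airy kernel. Note, however, that the paper itself offers no proof at all for this proposition---it is simply quoted as Theorem~1.6 of~\cite{Jo00b} and used as a black box---so your write-up goes well beyond what the paper does, supplying a roadmap of the cited result rather than reproducing anything from the present work.
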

Further, we need the following upper/lower tail estimates for the distribution of $L_{0\to (\eta N, N)}$. These estimates were collected in~\cite{FN13}, based on results from~\cite{BBP06,BFP12,FS05a}.

\begin{prop}[Proposition 4.2 in~\cite{FN13}]\label{ppUpTail} Let $0<\eta<\infty$. Then for any given $n_0>0$ and $s_0\in \R$, there exists constants $C,c>0$ only dependent on $N_0,s_0$ such that for all $N\geq N_0$ and $s\geq s_0$, we have
 \begin{align}\label{eq:ppUpTailBd}
 \Pb\left(L_{0\to (\lfloor \eta N\rfloor, \lfloor N\rfloor)}>\mu_{pp}N +s N^{1/3}\right)\leq C\exp(-cs)
\end{align}
 where $\mu_{pp}=(1+\sqrt{\eta})^2$.
 \end{prop}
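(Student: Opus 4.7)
The plan is to exploit the determinantal structure of exponential LPP. By the RSK correspondence, $L_{0 \to (\lfloor \eta N \rfloor, \lfloor N \rfloor)}$ coincides in distribution with (a shift of) the top eigenvalue of a Laguerre unitary ensemble matrix, so that the distribution function admits a Fredholm determinant representation
\begin{equation*}
\Pb\bigl(L_{0 \to (\lfloor \eta N \rfloor, \lfloor N \rfloor)} \leq t\bigr) = \det(I - K_N)_{L^2(t, \infty)},
\end{equation*}
with $K_N$ the Laguerre correlation kernel, which admits a double contour integral representation. The first step is to reduce the claim to a trace estimate via the elementary inequality $1 - \det(I - A) \leq \Tr A$, valid for positive trace-class $A$; it then suffices to show $\Tr K_N\big|_{L^2(t,\infty)} \leq C e^{-cs}$ uniformly in $s \geq s_0$ and $N \geq N_0$, for $t = \mu_{pp} N + s N^{1/3}$. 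Note that the target decay is only linear exponential in $s$, which is weaker than the $\exp(-c s^{3/2})$ one expects from the Airy/Tracy--Widom tail; this slack is what makes the estimate robust and extractable from standard edge asymptotics.

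The core of the argument is then a uniform steepest-descent analysis of $K_N(x,x)$ at the spectral edge. Writing
\begin{equation*}
K_N(x,x) = \frac{1}{(2\pi\I)^2} \oint \oint \frac{dz\,dw}{w - z}\, \exp\bigl(N(g(z) - g(w)) - x(z-w)\bigr)
\end{equation*}
for an explicit $g$ depending on $\eta$, one deforms $z$ and $w$ onto steepest-descent contours meeting at the critical point of $g$, which corresponds to the spectral edge $\mu_{pp}$. Plugging in $x = \mu_{pp} N + \tilde{s} N^{1/3}$ with $\tilde{s} \geq s$ yields an integrand of size $\exp(-c\tilde{s})$ along the steep parts of the contours, uniformly in $N$; integrating $x$ from $t$ to $\infty$ then gives the desired $C e^{-cs}$ bound on the trace.

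The main obstacle, in my view, is making the constants $c, C$ uniform in $s$ and $N$ simultaneously. For $s$ in a compact set $[s_0, s_1]$, Proposition~\ref{propPtToPt} only provides pointwise-in-$s$ convergence to $F_\GUE$, so the finite-$N$ slack must be quantified via the uniform kernel bound sketched above. For very large $s \geq s_1$, a cleaner alternative is a crude union bound: every up-right path from $(0,0)$ to $(\lfloor \eta N \rfloor, \lfloor N \rfloor)$ has weight distributed as $\Gamma(\lfloor \eta N \rfloor + \lfloor N \rfloor, 1)$, and there are $\binom{\lfloor \eta N \rfloor + \lfloor N \rfloor}{\lfloor N \rfloor}$ such paths, so Chernoff tails for the Gamma distribution beat the combinatorial factor once $s$ is large enough. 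Patching the two regimes gives the uniform $e^{-cs}$ estimate, which is what is collected from~\cite{BBP06,BFP12,FS05a} in Proposition~4.2 of~\cite{FN13}.
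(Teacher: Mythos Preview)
The paper does not prove this proposition; it is quoted as Proposition~4.2 of~\cite{FN13}, which in turn collects it from the edge asymptotics in~\cite{BBP06,BFP12,FS05a}. Your Fredholm--trace reduction together with a steepest--descent bound on the Laguerre kernel is exactly the mechanism behind those references, so the core of your proposal is correct and aligned with the cited literature.

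There is, however, a genuine gap in your ``patching'' step. The crude union bound over all up-right paths cannot produce the edge value $\mu_{pp}=(1+\sqrt\eta)^2$: the entropy of lattice paths, $(1+\eta)\log(1+\eta)-\eta\log\eta$, strictly exceeds the Cram\'er rate $(1+\eta)I\!\left(\tfrac{(1+\sqrt\eta)^2}{1+\eta}\right)$ of a single $\Gamma((1+\eta)N,1)$ variable at level $\mu_{pp}N$ (check $\eta=1$: $2\log 2>2(1-\log 2)$). Consequently the union bound yields nothing at all unless $sN^{1/3}$ is of order $N$, i.e.\ $s\gtrsim N^{2/3}$, which does not cover the regime $s\in[s_0,s_1]$ with $s_1$ independent of $N$ that you were trying to patch. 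So as written the two regimes do not overlap and the argument does not close.

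The fix is simply that the patching is unnecessary. The steepest--descent estimate on the double contour integral gives a pointwise bound of the form $K_N(\mu_{pp}N+\tilde s N^{1/3},\mu_{pp}N+\tilde s N^{1/3})\le C' N^{-1/3}e^{-c\tilde s}$ uniformly for all $\tilde s\ge s_0$ and $N\ge N_0$ (this is what is extracted in~\cite{BBP06,BFP12}); integrating in $\tilde s$ over $[s,\infty)$ already yields $\Tr K_N|_{L^2(t,\infty)}\le C e^{-cs}$ with constants independent of both $N$ and $s$. Drop the union--bound paragraph and state the uniformity of the kernel bound explicitly, and your sketch is complete.
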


 \begin{prop}[Proposition 4.3 in~\cite{FN13}]\label{ppLowTail} Let $0<\eta<\infty$ and $\mu_{pp}=(1+\sqrt{\eta})^2$. There exists positive constants $s_0,N_0,C,c$ such that the following holds
 \begin{align}
 \Pb\left(L_{0\to (\lfloor \eta N\rfloor, \lfloor N\rfloor )}\leq \mu_{pp}N+sN^{1/3}\right)\leq C\exp(-c|s|^{3/2})
 \end{align}
 for all $s\leq -s_0, N\geq N_0$.
 \end{prop}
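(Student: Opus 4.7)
\noindent\textbf{Sketch of how I would prove Proposition~\ref{ppLowTail}.} The plan is to use super-additivity of LPP combined with a one-block moderate-deviation bound that follows from the convergence stated in Proposition~\ref{propPtToPt}. Decomposing the $(\eta N,N)$-rectangle into $K\sim|s|^{3/2}$ diagonal sub-rectangles and applying a Cram\'er-type estimate to the $K$ independent block times will make the exponent $3/2$ appear by itself, matching the claimed bound.

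Concretely, I would first use Proposition~\ref{propPtToPt} to fix $a>0$, $M_0$ and $p\in(0,1)$ so that
\begin{equation*}
\Pb\bigl(L_{0\to(\lfloor \eta M\rfloor,\lfloor M\rfloor)}\leq \mu_{pp}M-aM^{1/3}\bigr)\leq p
\end{equation*}
for every $M\geq M_0$. For $s\leq -s_0$ with $N$ large, set $K=\lfloor c|s|^{3/2}\rfloor$ and partition $[0,\lfloor\eta N\rfloor]\times[0,\lfloor N\rfloor]$ into $K$ sub-rectangles with common corners on the diagonal, each of size roughly $(\eta N/K,N/K)$; writing $L_i$ for the point-to-point LPP time inside block $i$, the $L_i$ are independent and super-additivity yields $L_{0\to(\lfloor\eta N\rfloor,\lfloor N\rfloor)}\geq\sum_{i=1}^K L_i$. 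Since
\begin{equation*}
\mu_{pp}N+sN^{1/3}=\sum_{i=1}^K\bigl[\mu_{pp}(N/K)+sK^{-2/3}(N/K)^{1/3}\bigr]
\end{equation*}
and $sK^{-2/3}=-c^{-2/3}$ is a fixed negative constant, choosing $c$ small enough that $c^{-2/3}>a$ turns $\{L\leq\mu_{pp}N+sN^{1/3}\}$ into a large deviation for the empirical mean of $W_i:=(\mu_{pp}(N/K)-L_i)/(N/K)^{1/3}$. A Chernoff bound, using the uniform one-block estimate above together with the upper-tail control from Proposition~\ref{ppUpTail} to supply the necessary exponential moments of $W_i$, then gives probability at most $\exp(-c'K)=\exp(-c''|s|^{3/2})$.

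The main obstacle I anticipate is uniformity of the block bounds: Proposition~\ref{propPtToPt} is only an asymptotic statement, so I need quantitative control of the block tail probability that is valid uniformly in the block size $M=N/K$, which shrinks as $|s|$ grows. For the extreme range $|s|\gtrsim N^{2/3}$, where $N/K<M_0$, the argument must be complemented by the deterministic fact $L\geq 0$ (so the event is empty once $\mu_{pp}N+sN^{1/3}<0$) and by crude tail bounds on short LPP segments, both of which can be absorbed into the final constants $C,c$; this is why the statement requires $s_0$, $N_0$ rather than being valid for all parameters.
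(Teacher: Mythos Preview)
First, note that the paper does not give its own proof of this proposition: it is imported as Proposition~4.3 of~\cite{FN13}, where the bound is obtained through the exact (Fredholm determinant / random-matrix) representation of exponential LPP rather than through a probabilistic block argument. So there is no in-paper argument to compare against beyond the citation.

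Your super-additivity scheme is the right heuristic for the exponent $3/2$: the choice $K\sim|s|^{3/2}$ is exactly what makes the per-block deficit $sK^{-2/3}(N/K)^{1/3}$ a fixed multiple of the block fluctuation scale. But the Chernoff step hides a circularity. With $W_i=(\mu_{pp}M-L_i)/M^{1/3}$ and $M=N/K$, you need to bound $\Pb\bigl(\sum_i W_i\geq \alpha K\bigr)$, and an exponential Markov inequality for this calls for a uniform-in-$M$ bound on $\E[e^{\lambda W_i}]$ for some $\lambda>0$. That moment is controlled by the \emph{right} tail of $W_i$, i.e.\ by $\Pb(L_i\leq \mu_{pp}M-xM^{1/3})$ for large $x$---precisely the lower tail you are trying to prove. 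Proposition~\ref{ppUpTail} gives only $\Pb(W_i<-x)\leq Ce^{-cx}$, that is, the \emph{left} tail of $W_i$; it furnishes $\E[e^{-\lambda W_i}]$ uniformly but not $\E[e^{+\lambda W_i}]$. The trivial bound $W_i\leq \mu_{pp}M^{2/3}$ does make the latter finite, but with an $M$-dependent constant that destroys the exponential gain (carrying it through, the crude Chernoff is effective only once $|s|\gtrsim N^{2/3}$, where the event is essentially empty anyway).

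To close the loop one needs an extra input that you have not invoked: either a uniform-in-$M$ variance or higher-moment bound on $W_i$ (in the literature this comes from the integrable structure or from separate concentration estimates), or a bootstrap from a cruder lower-tail inequality for $L_i$. The uniformity issue you anticipated---that $M=N/K$ may drop below $M_0$---is real but secondary to this missing moment control.
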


Recall our initial data \begin{equation}
 \begin{aligned}
 x_{n}(0)&=-\left\lfloor \lambda^{-1} n\right\rfloor& \quad \text{when }n>0,\\
 x_{n}(0)&=-\left\lfloor \rho^{-1} n \right\rfloor& \quad \text{when } n\leq 0,
 \end{aligned}
 \end{equation}
 where $0<\lambda<\rho<1$. This creates a shock into the system.
We consider the following anti-diagonal lines
\begin{align}
{\cal L}_{\lambda}:= \left\{\left(\big\lfloor\tfrac{\lambda-1}{\lambda} x\big\rfloor, x\right): x\in \Z_{\geq 0}\right\},
\quad {\cal L}_{\rho}:= \left\{\left(\big\lfloor \tfrac{\rho-1}{\rho} x\big\rfloor, x\right) : x\in \Z_{<0}\right\}.
\end{align}

The next result consists in the limiting law from ${\cal L}_\rho$ and ${\cal L}_\lambda$.
\begin{lem}\label{Lemma1} Let $0< \lambda < \rho < 1$ be fixed. Define the constants
\begin{equation}
\begin{aligned}\label{parameter}
\gamma:= \frac{1-\lambda-\rho}{1-\lambda-\rho+2 \lambda\rho}, &\quad \mu_0:= \frac{2}{1-\lambda-\rho+2\lambda\rho},\\ \sigma_1:= \frac{2^{1/3}}{\left(\lambda(1-\lambda)(1-\lambda-\rho+2\lambda\rho)\right)^{1/3}}, & \quad \sigma_2:= \frac{2^{1/3}}{\left(\rho(1-\rho)(1-\lambda-\rho+2\lambda\rho)\right)^{1/3}}.
\end{aligned}
\end{equation}
Then, we have
\begin{align}\label{eq:Lemma1}
 \lim_{N\to \infty}\Pb\left(\frac{L_{{\cal L}_{\lambda}\to((1+\gamma) N, (1-\gamma)N )}-\mu_0 N}{N^{1/3}}\leq s\right)=F_{\GOE}\left(\frac{2^{2/3} s}{\sigma_1}\right),
\end{align}
and
\begin{align}\label{eq:Lemma2}
 \lim_{N\to \infty}\Pb\left(\frac{L_{{\cal L}_{\rho}\to((1+\gamma) N, (1-\gamma)N )}-\mu_0 N}{N^{1/3}}\leq s\right)=F_{\GOE}\left(\frac{2^{2/3} s}{\sigma_2}\right).
\end{align}
\end{lem}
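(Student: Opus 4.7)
My plan is to reduce Lemma~\ref{Lemma1} to the recent flat-initial-condition universality result of Ferrari and Occelli~\cite{FO17v1}. The line $\mathcal{L}_\lambda$ is exactly the initial line~\eqref{ell}--\eqref{eq:HalfLines} associated, through the TASEP--LPP dictionary of Section~\ref{Sec2Sub1}, with the deterministic periodic (``flat'') TASEP configuration $x_k(0)=-\lfloor k/\lambda\rfloor$, $k\geq 0$; similarly $\mathcal{L}_\rho$ corresponds to the flat configuration of density $\rho$. Thus $L_{\mathcal{L}_\lambda\to\cdot}$ and $L_{\mathcal{L}_\rho\to\cdot}$ are genuine flat-to-point exponential LPP times, i.e.\ the setting of~\cite{FO17v1}.

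To identify the law of large numbers I would use the variational principle. Parametrizing a point on $\mathcal{L}_\lambda$ by its height $u\in[0,b]$, with $(a,b)=(1+\gamma,1-\gamma)$, the point-to-point exponential shape function $g(x,y)=(\sqrt{x}+\sqrt{y})^{2}$ gives, in probability,
\begin{equation*}
\frac{L_{\mathcal{L}_\lambda\to(aN,bN)}}{N}\longrightarrow \sup_{u\in[0,b]}\Bigl(\sqrt{a+u(1-\lambda)/\lambda}+\sqrt{b-u}\Bigr)^{2}.
\end{equation*}
A short calculation shows that the unconstrained maximizer $u^{\ast}=b(1-\lambda)-\lambda^{2}a/(1-\lambda)$ lies in the interior $(0,b)$ precisely when $\rho\geq\lambda$, which is our standing hypothesis. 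Plugging $u^{\ast}$ back into the supremum and simplifying yields $(\lambda a+(1-\lambda)b)/(\lambda(1-\lambda))$, which reduces to $\mu_0$ from~\eqref{parameter} for our choice of $(a,b)$.

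The existence of an interior maximizer is precisely the regime in which~\cite{FO17v1} produces GOE Tracy--Widom fluctuations on the $N^{1/3}$ scale. Invoking their theorem, the fluctuation scale is computed from the curvature of the shape function at $u^{\ast}$ via the standard KPZ scaling relation; an explicit calculation gives $2^{-2/3}\sigma_{1}$, with the $2^{-2/3}$ being the conventional GOE normalization, and this recasts the limit as $F_{\GOE}(2^{2/3}s/\sigma_{1})$ as in~\eqref{eq:Lemma1}. The argument for $\mathcal{L}_\rho$ is identical after replacing $\lambda$ by $\rho$ and reflecting the range of $u$ to the negative side, yielding~\eqref{eq:Lemma2}.

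The main obstacle I anticipate is checking that the half-line character of $\mathcal{L}_\lambda$ and $\mathcal{L}_\rho$ does not alter the limit obtained from the fully flat line covered by~\cite{FO17v1}. Because $u^{\ast}$ lies strictly in the interior of the relevant half-axis, the geodesic typically starts in the bulk and the boundary contribution is negligible; quantifying this requires exit-point estimates for flat LPP---of the type developed later as Lemma~\ref{AlternativeExitPointLemma} and already present in~\cite{FO17v1}---that localize the geodesic's starting site around $u^{\ast}$ on scale $N^{2/3}$. Once that is in hand, the remainder is routine bookkeeping of the scale constants~\eqref{const2}.
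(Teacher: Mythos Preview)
Your proposal is correct and follows essentially the same route as the paper: both reduce to the flat-initial-condition result of~\cite{FO17v1} and both dispose of the half-line versus full-line discrepancy via the geodesic-localization estimate (Lemma~4.3 of~\cite{FO17v1}). The only cosmetic difference is that the paper, rather than computing the variational maximizer $u^\ast$ directly, performs a shift of coordinates along the $(1,1)$ direction that carries the endpoint $((1+\gamma)N,(1-\gamma)N)$ onto the diagonal $(\tilde N,\tilde N)$, so that Theorem~2.1 of~\cite{FO17v1} applies verbatim; your variational computation and their shift encode the same information.
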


\begin{proof}
Let us define ${\cal L}_\rho^\infty=\{(\lfloor(\rho-1) x/\rho\rfloor,x),x\in\Z\}$ the two-sided extension of ${\cal L}_\rho$. Let $Q$ the projection of
\begin{equation}\label{eqP}
P=((1+\gamma)N,(1-\gamma)N)
\end{equation}
along the $(1,1)$ direction onto ${\cal L}_\rho^\infty$, as illustrated in Figure~\ref{FigShift}.
\begin{figure}
\begin{center}
\psfrag{L}[cl]{${\cal L}_\rho$}
\psfrag{Lt}[cl]{$\widetilde {\cal L}_\rho$}
\psfrag{P}[cl]{$P$}
\psfrag{Pt}[cl]{$\widetilde P$}
\psfrag{At}[cl]{$\widetilde A$}
\psfrag{Q}[cl]{$Q$}
\includegraphics[height=6cm]{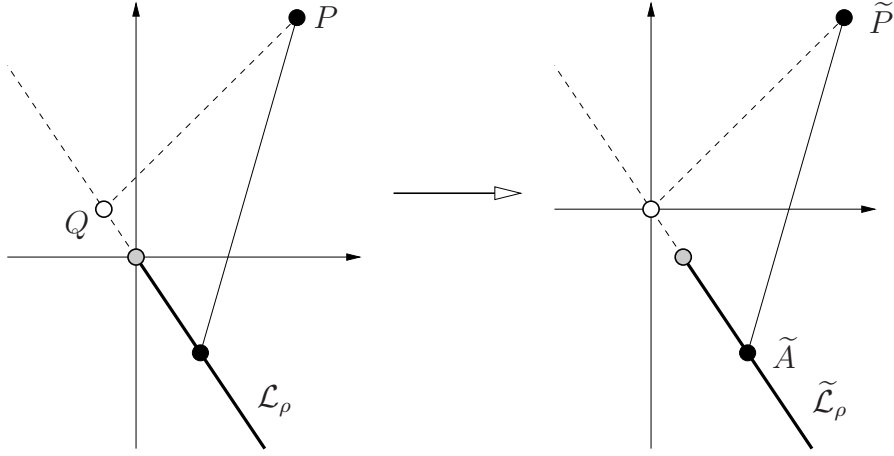}
\caption{Illustration of the mapping used in the proof of Lemma~\ref{Lemma1}. }
\label{FigShift}
\end{center}
\end{figure}
It is given by
\begin{equation}
Q=(2(1-\rho)\gamma N,-2\gamma\rho N).
\end{equation}
Now we set $\tilde N=(1-\gamma(1-2\rho)) N$ and we considering $Q$ as the new origin. Then ${\cal L}_\rho$ is mapped into $\widetilde {\cal L}_\rho=\{(\lfloor(\rho-1) x/\rho\rfloor,x),x\in\Z\cap\{x\leq 2\gamma\rho N\}\}$ and the point $P$ is mapped into $\widetilde P=(\tilde N,\tilde N)$. Thus
\begin{equation}\label{eq4.13}
\lim_{N\to\infty}\Pb\left(\frac{L_{{\cal L}_{\rho}\to((1+\gamma) N,(1-\gamma)N)}-\mu_0 N}{N^{1/3}}\leq s\right) =
\lim_{\tilde N\to\infty}\Pb\left(\frac{L_{\widetilde {\cal L}_{\rho}\to(\tilde N,\tilde N)}-a_0 \tilde N}{a_1 \tilde N^{1/3}}\leq \frac{s}{\sigma_1}\right)
\end{equation}
with $a_0=1/(\rho(1-\rho))$ and $a_1=1/(\rho(1-\rho))^{2/3}$. In Theorem~2.1 of~\cite{FO17v1} it is proven that
\begin{equation}\label{eq4.14}
\lim_{\tilde N\to\infty}\Pb\left(\frac{L_{{\cal L}_{\rho}^\infty\to(\tilde N,\tilde N)}-a_0 \tilde N}{a_1 \tilde N^{1/3}}\leq s\right) = F_{\GOE}(2^{2/3} s).
\end{equation}
To obtain \eqref{eq4.14}, one need to show that with high probability the maximizer is localized in a $M \tilde N^{2/3}$-neighborhood of $\widetilde A=(\tilde N(2\rho-1)/\rho,-\tilde N(2\rho-1)/(1-\rho))$. This was shown in Lemma~4.3 of~\cite{FO17v1}. Due to the condition $\rho>\lambda$, the starting point of the line $\widetilde {\cal L}_\rho$ is of order $\tilde N$ to the left of $\widetilde A$. Thus one has
\begin{equation}\label{eq4.15}
\lim_{\tilde N\to\infty}\Pb\left(\frac{L_{\widetilde {\cal L}_{\rho}\to(\tilde N,\tilde N)}-a_0 \tilde N}{a_1 \tilde N^{1/3}}\leq \frac{s}{\sigma_1}\right) =
\lim_{\tilde N\to\infty}\Pb\left(\frac{L_{{\cal L}_{\rho}^\infty\to(\tilde N,\tilde N)}-a_0 \tilde N}{a_1 \tilde N^{1/3}}\leq \frac{s}{\sigma_1}\right).
\end{equation}
Combining (\ref{eq4.13})-(\ref{eq4.15}) we get (\ref{eq:Lemma1}). The statement (\ref{eq:Lemma2}) is proven analogously.
\end{proof}

By maximizing the point-to-point law of large numbers, one gets that the maximizers from ${\cal L}_\rho$ to $P$ (defined in (\ref{eqP})) starts, on the $\Or(N)$ scale, from
\begin{equation}\label{eqA}
A_\rho=((\rho-1)/\rho,1)\xi_\rho N,\quad \xi_\rho=\frac{-2(\rho-\lambda)\rho}{1-\lambda-\rho+2\lambda\rho}.
\end{equation}
The straight line passing from $A_\rho$ to $P$ (defined in \eqref{eqP}) is the characteristic and it has direction given by $((1-\rho)^2,\rho^2)$. Similarly, the maximizers from ${\cal L}_\lambda$ to $P$ starts, on the $\Or(N)$ scale, from
\begin{equation}\label{eqB}
A_\lambda=((\lambda-1)/\lambda,1)\xi_\lambda N,\quad \xi_\lambda=\frac{2(\rho-\lambda)\lambda}{1-\lambda-\rho+2\lambda\rho}.
\end{equation}
Slow decorrelation~\cite{Fer08,CFP10b} says that the fluctuations from ${\cal L}_\rho$ and ${\cal L}_\lambda$ to the end-point are almost the same as the ones from these lines to a point of the characteristic line at a $o(N)$ distance to the end-point. Therefore, for any $0<\nu<1$, we define the points on the characteristics
\begin{eqnarray}
E_\rho &=&((1+\gamma) N-(1-\rho)^2 N^{\nu}, (1-\gamma)N-\rho^2 N^\nu),\\
E_\lambda &= &((1+\gamma) N- (1-\lambda)^2 N^{\nu}, (1-\gamma)N-\lambda^2 N^\nu),
\end{eqnarray}
see Figure~\ref{FigDecorr}.
\begin{figure}
\begin{center}
\psfrag{Lr}[cl]{${\cal L}_\rho$}
\psfrag{Ll}[cl]{${\cal L}_\lambda$}
\psfrag{P}[cl]{$P$}
\psfrag{Al}[cr]{$A_\lambda$}
\psfrag{Ar}[cl]{$A_\rho$}
\psfrag{El}[cr]{$E_\lambda$}
\psfrag{Er}[cl]{$E_\rho$}
\includegraphics[height=6cm]{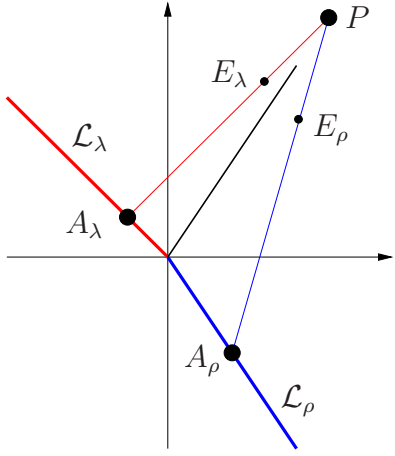}
\caption{Illustration of the geometric setting of Lemma~\ref{Lemma2} and~\ref{Lemma3}. The black solid line represents the set of points $\cup_{\eta\in {\cal J}_{\beta,N}} D_\eta$.}
\label{FigDecorr}
\end{center}
\end{figure}

\begin{lem}\label{Lemma2}
There exists some $\tilde \mu_1(\gamma)$ and $\tilde \mu_2(\gamma)$ such that
\begin{equation}\label{eq:Lemma3}
\begin{aligned}
\lim_{N\to\infty} \Pb\left(\frac{L_{E_\lambda\to((1+\gamma) N, (1-\gamma)N)}-\tilde \mu_1 N^\nu}{N^{\nu/3}}\leq s\right)&=G_0(s),\\
\lim_{N\to\infty} \Pb\left(\frac{L_{E_\rho\to((1+\gamma) N, (1-\gamma)N)}-\tilde \mu_2 N^\nu}{N^{\nu/3}}\leq s\right)&=G^\prime_0(s),
\end{aligned}
\end{equation}
and
\begin{equation}\label{eq:Lemma4}
\begin{aligned}
\lim_{N\to\infty} \Pb\left(\frac{L_{{\cal L}_\lambda\to E_\lambda}-\mu_0 N+\tilde \mu_1 N^\nu}{N^{1/3}}\leq s\right)&= F_{\GOE}\left(\frac{2^{2/3}s}{\sigma_1}\right),\\
\lim_{N\to\infty} \Pb\left(\frac{L_{{\cal L}_\rho\to E_\rho}-\mu_0 N+\tilde \mu_2 N^\nu}{N^{1/3}}\leq s\right)&=F_{\GOE}\left(\frac{2^{2/3}s}{
\sigma_2}\right),
\end{aligned}
\end{equation}
where $G_0$ and $G^\prime_0$ are GUE Tracy-Widom distributions up to a scaling factor.
\end{lem}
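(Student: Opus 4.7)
The plan is to treat \eqref{eq:Lemma3} and \eqref{eq:Lemma4} in turn: the first is a direct application of Johansson's point-to-point theorem, while the second follows from slow decorrelation combined with Lemma~\ref{Lemma1}.

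For \eqref{eq:Lemma3}, I would use the translation invariance of the i.i.d.\ weights to identify $L_{E_\lambda\to P}$ in law with $L_{0\to(\lfloor\eta M\rfloor,\lfloor M\rfloor)}$, where $M=\lambda^2 N^\nu$ and $\eta=((1-\lambda)/\lambda)^2$, since $P-E_\lambda=((1-\lambda)^2 N^\nu,\lambda^2 N^\nu)$. Plugging these parameters into Proposition~\ref{propPtToPt} identifies the centering $\tilde\mu_1 N^\nu=(1+\sqrt{\eta})^2 M = N^\nu$ (so $\tilde\mu_1=1$) and the fluctuation scale $\sigma_\eta M^{1/3} = N^{\nu/3}(\lambda(1-\lambda))^{-1/3}$, with the GUE Tracy-Widom limit. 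Hence $G_0$ is the rescaled $F_{\GUE}$. The same computation with $\rho$ in place of $\lambda$ yields $\tilde\mu_2=1$ and identifies $G_0'$, proving \eqref{eq:Lemma3}.

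For \eqref{eq:Lemma4}, the strategy is the slow decorrelation argument of~\cite{Fer08,CFP10b}. Since $E_\lambda$ lies on the characteristic segment from $A_\lambda$ to $P$ at Euclidean distance of exact order $N^\nu$ with $\nu<1$, slow decorrelation gives
\begin{equation*}
\frac{L_{{\cal L}_\lambda\to P}-L_{{\cal L}_\lambda\to E_\lambda}-L_{E_\lambda\to P}}{N^{1/3}}\longrightarrow 0
\end{equation*}
in probability. The easy direction is super-additivity $L_{{\cal L}_\lambda\to P}\geq L_{{\cal L}_\lambda\to E_\lambda}+L_{E_\lambda\to P}$ (up to a harmless $O(1)$ correction from a possibly doubly counted vertex). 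Combining this with Lemma~\ref{Lemma1}, which gives the $N^{1/3}$-scale GOE limit of $L_{{\cal L}_\lambda\to P}$ centered at $\mu_0 N$, and with \eqref{eq:Lemma3}, which gives that $L_{E_\lambda\to P}$ concentrates on $\tilde\mu_1 N^\nu$ with fluctuations on the negligible scale $N^{\nu/3}\ll N^{1/3}$, rearranging yields the first line of \eqref{eq:Lemma4} with the same scaling constant $\sigma_1$. The argument for $L_{{\cal L}_\rho\to E_\rho}$ is symmetric and uses $\sigma_2$.

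The main obstacle will be securing the matching upper bound in the slow decorrelation step, since standard references often formulate it in point-to-point rather than line-to-point settings. What is needed is the concentration of the starting endpoint of the maximizer from ${\cal L}_\lambda$ to $P$ around $A_\lambda$ on the $o(N)$ scale; this is essentially Lemma~4.3 of~\cite{FO17v1}, already invoked in the proof of Lemma~\ref{Lemma1}. Once that localization is in hand, the slow decorrelation argument of~\cite{CFP10b} extends verbatim, so this step reduces to quoting the existing estimates together with the one-point upper and lower tail bounds in Propositions~\ref{ppUpTail} and~\ref{ppLowTail}.
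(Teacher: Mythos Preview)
Your treatment of \eqref{eq:Lemma3} is exactly what the paper does: invoke Proposition~\ref{propPtToPt} after translation. Nothing to add there.

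For \eqref{eq:Lemma4}, however, your route differs from the paper's and has a genuine gap. The paper does \emph{not} use slow decorrelation here; it simply reruns the proof of Lemma~\ref{Lemma1} with the endpoint $P$ replaced by $E_\lambda$ (resp.\ $E_\rho$). Since $E_\lambda$ differs from $P$ by $O(N^\nu)$ with $\nu<1$, the same shift-to-$Q$ argument and the same appeal to Theorem~2.1 and Lemma~4.3 of~\cite{FO17v1} go through verbatim, only the centering picking up the additional $\tilde\mu_1 N^\nu$ term. This is a one-line observation once Lemma~\ref{Lemma1} is in hand.

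Your proposed slow-decorrelation route is problematic because Theorem~2.1 of~\cite{CFP10b} is stated in the form: if $X_N\geq Y_N$ and both $(X_N-c_N)/N^{1/3}$ and $(Y_N-c_N)/N^{1/3}$ converge to the \emph{same} law, then $(X_N-Y_N)/N^{1/3}\to 0$. In the present application $X_N=L_{{\cal L}_\lambda\to P}$ and $Y_N=L_{{\cal L}_\lambda\to E_\lambda}+L_{E_\lambda\to P}$, and the required input ``$Y_N$ has the GOE limit on the $N^{1/3}$ scale'' is precisely \eqref{eq:Lemma4}. Indeed, the paper later uses Lemma~\ref{Lemma2} to \emph{verify} the hypotheses of the slow-decorrelation theorem in the proof of Proposition~\ref{ComponentConvergenceProp}; inverting that logic is circular. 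Super-additivity alone only yields one inequality, namely $\liminf_N \Pb\big((L_{{\cal L}_\lambda\to E_\lambda}-\mu_0 N+\tilde\mu_1 N^\nu)/N^{1/3}\leq s\big)\geq F_{\GOE}(2^{2/3}s/\sigma_1)$; localization of the starting point of the maximizer near $A_\lambda$ does not by itself furnish the matching bound. The direct approach---redo Lemma~\ref{Lemma1} at the perturbed endpoint---is both simpler and what the paper actually does.
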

\begin{proof}
(\ref{eq:Lemma3}) follows from Proposition~\ref{propPtToPt}, while (\ref{eq:Lemma4}) are obtained as in Lemma~\ref{Lemma1}.
\end{proof}

\begin{lem}\label{Lemma3}
 Fix any $2/3<\beta<1$. For $\eta\in \mathcal{J}_{\beta,N}:=[0,1-N^{\beta-1}]$ define the points $D_\eta=(\lfloor \eta (1+\gamma) N\rfloor , \lfloor \eta(1-\gamma) N\rfloor)$. Then
\begin{equation}\label{eq:Lemma5}
 \begin{aligned}
 \lim_{N\to\infty}\Pb\Bigg(\bigcup_{\eta\in \mathcal{J}_{\beta,N}}\left\{D_\eta\in \pi^{\max}_{L_{{\cal L}_{\lambda}\to E_\lambda}}\right\}\Bigg)=0,\\
 \lim_{N\to\infty}\Pb\Bigg(\bigcup_{\eta\in \mathcal{J}_{\beta,N}}\left\{D_\eta\in \pi^{\max}_{L_{{\cal L}_{\rho}\to E_\rho}}\right\}\Bigg)=0.
 \end{aligned}
\end{equation}
\end{lem}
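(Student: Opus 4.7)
The strategy is a law-of-large-numbers suboptimality argument. The points $D_\eta$ all lie on the segment from the origin to $P=((1+\gamma)N,(1-\gamma)N)$, which has slope $\lambda\rho/((1-\lambda)(1-\rho))$, whereas the a.s.\ unique maximizer of $L_{\mathcal{L}_\lambda\to E_\lambda}$ concentrates near the characteristic from $A_\lambda$ (defined in \eqref{eqB}) to $E_\lambda$, of slope $\lambda^2/(1-\lambda)^2$. The two slopes coincide only when $\lambda=\rho$, so going through any $D_\eta$ with $\eta\in\mathcal{J}_{\beta,N}$ forces the path off the characteristic at a macroscopic scale.

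First I would establish the LLN for the two relevant times. Arguing as in Lemma~\ref{Lemma1} (the problem is homogeneous in $\eta$ and the optimal exit point on $\mathcal{L}_\lambda$ is $\eta A_\lambda$), one obtains $L_{\mathcal{L}_\lambda\to D_\eta}/N\to\mu_0\eta$, while Proposition~\ref{propPtToPt} yields $L_{D_\eta\to E_\lambda}/N\to\mu_{\rm pp}(1-\eta)$ with $\mu_{\rm pp}:=(\sqrt{1+\gamma}+\sqrt{1-\gamma})^2=2+2\sqrt{1-\gamma^2}$. Using $1-\gamma^2=4\lambda\rho(1-\lambda)(1-\rho)/(1-\lambda-\rho+2\lambda\rho)^2$, an AM--GM computation on $\lambda(1-\rho)+\rho(1-\lambda)\geq 2\sqrt{\lambda(1-\rho)\rho(1-\lambda)}$ (strict since $\lambda<\rho$) gives $\mu_0>\mu_{\rm pp}$; set $\kappa:=\mu_0-\mu_{\rm pp}>0$. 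Hence for every $\eta\in\mathcal{J}_{\beta,N}$ the first-order gap is
\[
\mu_0 N-\bigl(\mu_0\eta N+\mu_{\rm pp}(1-\eta)N\bigr)=\kappa(1-\eta)N\geq \kappa N^\beta,
\]
while Lemma~\ref{Lemma2} gives $L_{\mathcal{L}_\lambda\to E_\lambda}=\mu_0 N+O_{\mathbb{P}}(N^{1/3})$.

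Next I would convert this gap into a probability bound. The event $\{D_\eta\in\pi^{\max}\}$ is exactly $\{L_{\mathcal{L}_\lambda\to D_\eta}+L_{D_\eta\to E_\lambda}=L_{\mathcal{L}_\lambda\to E_\lambda}\}$. Fix $M>0$ large; by Lemma~\ref{Lemma2}, $\Pb(L_{\mathcal{L}_\lambda\to E_\lambda}\leq \mu_0 N-MN^{1/3})\leq \delta(M)$ with $\delta(M)\to 0$ as $M\to\infty$. On its complement, the equality above and the gap estimate force
\[
L_{\mathcal{L}_\lambda\to D_\eta}\geq \mu_0\eta N+\tfrac{\kappa}{3}N^\beta \quad\text{or}\quad L_{D_\eta\to E_\lambda}\geq \mu_{\rm pp}(1-\eta) N+\tfrac{\kappa}{3}N^\beta
\]
for all $N$ large (since $\beta>1/3$). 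For the point-to-point summand, Proposition~\ref{ppUpTail} gives the corresponding probability $\leq \exp(-cN^{\beta-1/3})$. For the line-to-point summand, I would first localize the exit point $Z\in\mathcal{L}_\lambda$ to an $N^{2/3}$-window around $\eta A_\lambda$ via Lemma~\ref{AlternativeExitPointLemma}, then union-bound Proposition~\ref{ppUpTail} over the $O(N^{2/3})$ candidate starting points in that window. Since the $D_\eta$ take only $O(N)$ distinct lattice values as $\eta$ ranges over $\mathcal{J}_{\beta,N}$, a final union bound gives
\[
\Pb\Bigl(\bigcup_{\eta\in\mathcal{J}_{\beta,N}}\{D_\eta\in\pi^{\max}\}\Bigr)\leq \delta(M)+N\cdot N^{2/3}\exp\bigl(-cN^{\beta-1/3}\bigr)=\delta(M)+o(1)
\]
as $N\to\infty$, and letting $M\to\infty$ concludes. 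The second statement follows by the same argument with $(\lambda, A_\lambda, E_\lambda)$ replaced by $(\rho, A_\rho, E_\rho)$.

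The principal technical obstacle is the line-to-point upper tail used above: Propositions~\ref{ppUpTail}--\ref{ppLowTail} are stated only for the point-to-point LPP and must be lifted to line-to-point \emph{uniformly} in $\eta\in\mathcal{J}_{\beta,N}$. This is precisely the role played by the exit-point estimate Lemma~\ref{AlternativeExitPointLemma}, which the authors establish separately earlier in the paper.
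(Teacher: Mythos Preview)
Your approach is correct in spirit but takes a genuinely different route from the paper. The paper's proof is purely geometric: it first localizes the starting point of $\pi^{\max}_{\mathcal{L}_\rho\to E_\rho}$ near $A_\rho$ (Lemma~4.3 of~\cite{FO17v1}), then sandwiches the line-to-point maximizer by the point-to-point maximizer from a nearby point $R$ to $E_\rho$, and finally invokes the transversal fluctuation estimate Theorem~3.5 of~\cite{N17} to conclude that this point-to-point geodesic stays within an $N^{\chi_2}$-tube (with $2/3<\chi_2<\beta$) around the characteristic, hence misses all $D_\eta$ with $\eta\le 1-N^{\beta-1}$. No comparison of LPP \emph{values} is made; the argument is entirely about the location of the path.

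Your argument is instead an energy-gap computation: you show directly that passing through any $D_\eta$ costs at least $\kappa N^\beta$ at the law-of-large-numbers level, and then convert this into an exponentially small probability via the upper-tail bound Proposition~\ref{ppUpTail} and a union bound. The computation $\mu_0-\mu_{\rm pp}=2(\sqrt{\lambda(1-\rho)}-\sqrt{\rho(1-\lambda)})^2/(1-\lambda-\rho+2\lambda\rho)>0$ is correct and is the heart of the matter. The trade-off is that your route is more elementary (it avoids the black-box transversal-fluctuation input from~\cite{N17}) but requires more bookkeeping, in particular a line-to-point upper-tail estimate uniform in $\eta$.

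One correction: for the localization of the exit point of $L_{\mathcal{L}_\lambda\to D_\eta}$ you cite Lemma~\ref{AlternativeExitPointLemma}, but that lemma concerns the exit point of the \emph{stationary} model $L^{{\rm stat},\varrho}$ with boundary weights, not the flat line-to-point problem. The correct input is Lemma~4.3 of~\cite{FO17v1} (used in the paper as Lemma~\ref{ExitPointOfGeneralPercolation}), which gives the Gaussian bound for the exit point of $L_{\mathcal{L}_\lambda^\infty\to\cdot}$. Alternatively, you can bypass localization altogether: bound $L_{\mathcal{L}_\lambda\to D_\eta}\le \max_{k\ge 0} L_{(\lfloor(\lambda-1)k/\lambda\rfloor,k)\to D_\eta}$, use that the point-to-point law of large numbers is strictly concave in the starting point with maximum $\mu_0\eta N$ at $k=\eta\xi_\lambda N$, and union-bound Proposition~\ref{ppUpTail} directly over all $O(N)$ values of $k$; the quadratic penalty in $k$ absorbs the polynomial factor.
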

\begin{proof}
We prove the statement for $\pi^{\max}_{L_{{\cal L}_{\rho}\to E_\rho}}$, the other being similar.
Let us set $\varepsilon=N^{-\chi_1}$ for some $0<\chi_1<1/3$. Recall $\xi_\rho$ from \eqref{eqA}. For the event
 \begin{equation}
 B(N, \varepsilon):= \{ \pi^{\mathrm{max}}_{\mathcal{L}^{\rho}\to E_{\rho}}(0)\in \{ ((\rho-1)x/\rho, x): x \leq (\xi_\varrho+\varepsilon )N \} \}
 \end{equation}
 we have $\lim_{N \to \infty}\Pb(B(N, \varepsilon))=1$ by Lemma~4.3 of~\cite{FO17v1}. Consider the point \mbox{$R=(\lfloor (\rho-1)(\xi_\varrho+\varepsilon )N/\rho\rfloor, \lfloor (\xi_\varrho+\varepsilon )N\rfloor)$}.
 Then
 \begin{equation}
 B(N, \varepsilon)\cap \bigcup_{\eta\in \mathcal{J}_{\beta,N}}\left\{D_\eta\in \pi^{\max}_{L_{{\cal L}_{\rho}\to E_\rho}}\right\}\subseteq \bigcup_{\eta\in \mathcal{J}_{\beta,N}}\left\{D_\eta\in \pi^{\max}_{R\to E_\rho}\right\},
 \end{equation}
since $ \pi^{\max}_{L_{{\cal L}_{\rho}\to E_\rho}}$ can coalesce with, but not cross $ \pi^{\max}_{R\to E_\rho}$.
Define the event $Y(N)$ that $\pi^{\max}_{R\to E_\rho}$ contains a point of the straight line (in $\Z^{2}$) joining $R+(0,N^{\chi_2})$ with $E_\rho+(0,N^{\chi_2})$, for some $1>\chi_2>2/3$. By Theorem~3.5 of~\cite{N17}, we have that $\Pb(Y(N))$ converges to $0$.
Choose now $1-\chi_1<\chi_2<\beta$. The straight line joining $R+(0,N^{\chi_2})$ with $E_\rho+(0,N^{\chi_2})$ crosses the line from $(0,0)$ to $((1+\gamma) N,(1-\gamma)N)$ at euclidean distance $\Or(N^{\chi_2})$ from $ ((1+\gamma) N,(1-\gamma)N)$. Since $\Pb(Y(N))$ converges to $0$, so does $\Pb\left( \bigcup_{\eta\in \mathcal{J}_{\beta,N}}\left\{D_\eta\in \pi^{\max}_{R\to E_\rho}\right\}\right)$, finishing the proof.
\end{proof}

\section{Proof of the main theorem}\label{BodyOfheProof}
In Section~\ref{SectPreliminary} we provide some results on the limiting LPP problems as well as tightness-type results. The proofs are mostly postponed to Section~\ref{AuxilliaryResults}. These results are then employed to prove the main result, Theorem~\ref{MainTheoremForDifferentDensity} in Section~\ref{SectProofMainResult}.

\subsection{Preliminary results}\label{SectPreliminary}
The very first result of this section corresponds to the scaling limit of the last passage times from two half lines $\cal{L}_{\lambda}$ and $\cal{L}_{\rho}$ to the points (on the line $x+y=2N$) lying in a neighborhood of $((1+\gamma)N, (1-\gamma)N)$ (see (\ref{parameter}) for $\gamma$). Before stating the result, we introduce some notations. Recall that $P=((1+\gamma)N,(1-\gamma)N)$. Any point on the line $x+y=2N$ can be written as $((1+\gamma)N+U,(1-\gamma)N-U)$ for some $U\in \R$. For convenience, we denote
\begin{equation}\label{eq:defPU}
P(U):=\left((1+\gamma)N+U, (1-\gamma)N-U\right),
\end{equation}
so that $P=P(0)$, see Figure~\ref{FigGeneralP} for an illustration.
\begin{figure}
\begin{center}
\psfrag{Lr}[cl]{${\cal L}_\rho$}
\psfrag{Ll}[cl]{${\cal L}_\lambda$}
\psfrag{P0}[cl]{$P(0,0)$}
\psfrag{P1}[cl]{$P(U,0)$}
\psfrag{P2}[cl]{$P(U,V)$}
\psfrag{P3}[cr]{$P(0,V)$}
\includegraphics[height=6cm]{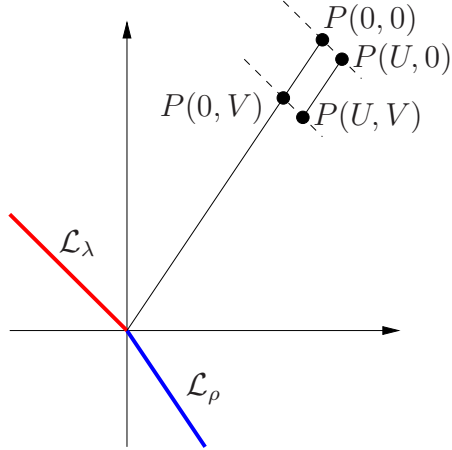}
\caption{Illustration of the different end-points considered in this section. We have $P=P(0)=P(0,0)=((1+\gamma)N,(1-\gamma)N)$, $P(U)=P(U,0)$. In the analysis we take $U,V$ of order $N^{1/3}$.}
\label{FigGeneralP}
\end{center}
\end{figure}
We further define the ``spatial'' rescaled process
\begin{equation}\label{eq:DefZeta1}
\begin{aligned}
 L^{{\rm resc},\lambda}_{N}(u)&:= \frac{L_{{\cal L}_{\lambda}\to P(uN^{1/3})}-\left(\mu_0 N - \frac{u(1-2\lambda)}{(1-\lambda)\lambda}N^{1/3}\right)}{N^{1/3}},\\
 L^{{\rm resc},\rho}_{N}(u) & := \frac{L_{{\cal L}_{\rho}\to P(uN^{1/3})}- \left(\mu_0 N - \frac{u(1-2\rho)}{\rho(1-\rho)}N^{1/3}\right)}{N^{1/3}},
 \end{aligned}
\end{equation}
 where $\gamma,\mu_0$ are same as in Lemma~\ref{Lemma1}.

Throughout the text, we say that two sequences $(Y_{N})_{N\in \N},(\tilde{Y}_{N})_{N\in \N}$ of random variables are \emph{asymptotically independent} to each other,
 if for all $s_1, s_2 \in \R$
 \begin{equation}
 \lim_{N \to \infty}\Pb(Y_N \leq s_1, \tilde{Y}_{N}\leq s_2)= \lim_{N \to \infty}\Pb(Y_N \leq s_1) \lim_{N \to \infty}\Pb(\tilde{Y}_{N}\leq s_2).
 \end{equation}
 \begin{prop}\label{ComponentConvergenceProp}
 Recall the definition of $\mu, \sigma_1$ and $\sigma_2$ from Theorem~\ref{MainTheoremForDifferentDensity}. Then, for any fixed $u\in \mathbb{R}$, we have
 \begin{equation}\label{eq:GOEConvergence1}
 \begin{aligned}
 \lim_{N\to \infty} \Pb\big(L^{{\rm resc},\lambda}_{N}(u)\leq s\big)& = F_{\GOE}\left(\frac{2^{2/3}s}{\sigma_1}\right),\\
 \lim_{N\to \infty} \Pb\big(L^{{\rm resc},\rho}_{N}(u)\leq s\big) & = F_{\GOE}\left(\frac{2^{2/3}s}{\sigma_2}\right).
 \end{aligned}
 \end{equation}
 Moreover, for any fixed $u$, $L^{{\rm resc},\lambda}_{N}(u)$ and $L^{{\rm resc},\rho}_{N}(u)$ are asymptotically independent to each other.

Otherwise stated, for any fixed $u$, $(L^{{\rm resc},\lambda}_{N}(u),L^{{\rm resc},\rho}_{N}(u))$ converges in distribution to $(\xi^{(1)}_{\mathrm{GOE}}\sigma_1 2^{-2/3}, \xi^{(2)}_{\mathrm{GOE}}\sigma_2 2^{-2/3})$ where $\xi^{(k)}_{\rm GOE}$, $k=1,2$, are two independent GOE Tracy-Widom distributed random variables.
 \end{prop}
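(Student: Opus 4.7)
The plan is to treat the two assertions separately. The marginals are obtained by adapting Lemma~\ref{Lemma1} to the shifted endpoint $P(uN^{1/3})$, while the asymptotic independence is obtained by slow decorrelation combined with the transversal-localization estimate of Lemma~\ref{Lemma3}.

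\emph{Marginals.} The idea is to repeat the projection argument of Lemma~\ref{Lemma1} with $P$ replaced by $P(uN^{1/3})$. Projecting $P(uN^{1/3})$ onto $\mathcal{L}_\rho^\infty$ along $(1,1)$ yields a new origin $Q(u)$ and a rescaled endpoint $(\widetilde N(u),\widetilde N(u))$ with $\widetilde N(u) = \widetilde N + (2\rho-1)uN^{1/3} + O(1)$. Then $a_0 \widetilde N(u) = \widetilde N(u)/(\rho(1-\rho))$ equals $\mu_0 N - \frac{u(1-2\rho)}{\rho(1-\rho)}N^{1/3}$ up to $O(1)$, which is exactly the deterministic centering appearing in \eqref{eq:DefZeta1}: the coefficient $(1-2\rho)/(\rho(1-\rho))$ is nothing but the directional derivative of the point-to-point limit shape along the anti-diagonal. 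The fluctuation scale $a_1 \widetilde N(u)^{1/3}$ equals $\sigma_2 N^{1/3}$ to leading order, so Theorem~2.1 of~\cite{FO17v1} combined with the line-localization argument used in deriving \eqref{eq4.15} gives the stated GOE limit for $L^{\mathrm{resc},\rho}_N(u)$. The $\mathcal{L}_\lambda$ case is symmetric with $\rho \mapsto \lambda$, producing the shift $-\frac{u(1-2\lambda)}{\lambda(1-\lambda)}N^{1/3}$.

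\emph{Asymptotic independence.} Let $E_\lambda(u), E_\rho(u)$ denote the natural $u$-analogues of $E_\lambda, E_\rho$: the points on the characteristics through $\mathcal{L}_\lambda^\infty, \mathcal{L}_\rho^\infty$ reaching $P(uN^{1/3})$, at Euclidean distance of order $N^\nu$ from $P(uN^{1/3})$. The standard slow-decorrelation argument~\cite{Fer08,CFP10b} -- superadditivity of LPP together with the fact that $L_{E_\lambda(u)\to P(uN^{1/3})}$ has fluctuations only of order $N^{\nu/3}=o(N^{1/3})$ by Lemma~\ref{Lemma2} -- yields
\[
L^{\mathrm{resc},\lambda}_N(u) = \frac{L_{\mathcal{L}_\lambda\to E_\lambda(u)} - \E[L_{\mathcal{L}_\lambda\to E_\lambda(u)}]}{N^{1/3}} + o_{\Pb}(1),
\]
and analogously for $\rho$. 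It is therefore enough to prove joint asymptotic independence of $L_{\mathcal{L}_\lambda\to E_\lambda(u)}$ and $L_{\mathcal{L}_\rho\to E_\rho(u)}$. Choosing $\nu>\beta>2/3$ and applying Lemma~\ref{Lemma3} to each, up to events of vanishing probability each LPP time is determined by the weights in an $o(N)$-neighborhood of its own characteristic; since $E_\lambda(u)$ and $E_\rho(u)$ lie on distinct characteristics at distance of order $N^\nu \gg N^\beta$ from the common meeting point $P(uN^{1/3})$, these two neighborhoods are disjoint for $N$ large. The i.i.d.\ structure of the weights $\{\omega_{i,j}\}$ then yields joint convergence with independent limits.

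\emph{Main obstacle.} The principal difficulty is verifying uniformity in $u$: Lemmas~\ref{Lemma2} and~\ref{Lemma3}, stated for the fixed endpoint $P$, must be re-derived with $P$ replaced by $P(uN^{1/3})$, and the points $E_\lambda(u), E_\rho(u)$ must be redefined so as to still lie along the correct characteristic lines at distance of order $N^\nu$. Since the $N^{1/3}$ endpoint shift is much smaller than the transversal-fluctuation scale $N^{2/3}$, this should follow from the same arguments with only cosmetic modifications, but the bookkeeping must be done with care. A secondary point is that the half-line maximizers from $\mathcal{L}_\lambda, \mathcal{L}_\rho$ must coincide, with probability tending to $1$, with those from the two-sided extensions $\mathcal{L}_\lambda^\infty, \mathcal{L}_\rho^\infty$ used in the proof of Lemma~\ref{Lemma1}; this is precisely what the exit-point tail estimates akin to Lemma~\ref{AlternativeExitPointLemma} give.
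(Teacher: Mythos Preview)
Your proposal is correct and follows essentially the same route as the paper: the marginals via the substitution $\gamma\to\gamma+uN^{-2/3}$ in Lemma~\ref{Lemma1}, and the asymptotic independence via slow decorrelation (Lemma~\ref{Lemma2}) combined with the localization of maximizers (Lemma~\ref{Lemma3}). The only cosmetic difference is that the paper packages the independence step by invoking Theorem~2.1 of~\cite{FN13} and checking its three assumptions, whereas you unfold that argument explicitly; the mathematical content is identical.
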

\begin{proof}
The limiting formulas (\ref{eq:GOEConvergence1}) are obtained by replacing $\gamma\to\gamma+u N^{-2/3}$ in the derivation of Lemma~\ref{Lemma1}. Similarly, Lemma~\ref{Lemma2} holds also with $\gamma\to\gamma+u N^{-2/3}$, so that the end-point of the LPP problem is $P(uN^{1/3})=\left((1+\gamma)N+u N^{1/3}, (1-\gamma)N-uN^{1/3}\right)$.
Concerning the asymptotic independence, it can be obtained  as a consequence of the general Theorem~2.1 in~\cite{FN13}, which holds under three Assumptions given in~\cite{FN13}. Now \eqref{eq:GOEConvergence1} verifies Assumption~1 of~\cite{FN13}. Lemma~\ref{Lemma2} verifies Assumption~2 of~\cite{FN13} (upon replacing $P$ by $P(uN^{1/3})$), which are the hypothesis of the slow-decorrelation theorem, see Theorem~2.1 of~\cite{CFP10b}. This theorem implies that difference of $L^{{\rm resc},\lambda}_{N}(u)$ (or, $L^{{\rm resc},\rho}_{N}(u)$) and random variables defined in (\ref{eq:Lemma4}) (of course making the replacement $\gamma\to\gamma+u N^{-2/3}$) goes to $0$ in probability, as $N\to\infty$. Finally, Lemma~\ref{Lemma3} verifies Assumption~3 of~\cite{FN13}, which says that that the maximizers to $E_\lambda$ and $E_\rho$ stay in two separate sets of independent random variables with probability $1$ as $N\to\infty$. This finishes the proof.
\end{proof}

By KPZ scaling theory, one expects that the processes $u\mapsto L^{{\rm resc},\lambda}_{N}(uN^{1/3})$ (resp, $u\mapsto L^{{\rm resc},\rho}_{N}(uN^{1/3})$) converges to an Airy$_1$ process in $u$ (see~\cite{FO17v1} for a proof of it). This means that if we look at the processes $u\mapsto L^{{\rm resc},\lambda}_{N}(u)$ (or, $ L^{{\rm resc},\lambda}_{N}(u)$), then it will converges to a process with constant value in the $N\to\infty$ limit. The reason why we consider here the latter scaling is that the shock, which is the observable that we want to analyze, has width of order $N^{1/3}$ only. For proving Theorem~\ref{MainTheoremForDifferentDensity} we will need to make this observation precise.

\begin{prop}\label{Tightness}
Fix any $C>0$. Then, for any $\e>0$ , we have
\begin{align}\label{eq:LimitOfModulus}
&\lim_{N\to \infty}\Pb\bigg(\sup_{|u|,|v|\leq C}|L^{{\rm resc},\lambda}_{N}(u)-L^{{\rm resc},\lambda}_{N}(v)|) >\e\bigg)= 0, \\
&\lim_{N\to \infty}\Pb\bigg(\sup_{|u|,|v|\leq C}|L^{{\rm resc},\rho}_{N}(u)-L^{{\rm resc},\rho}_{N}(v)|) >\e\bigg)= 0.
\end{align}
\end{prop}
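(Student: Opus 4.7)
The plan is to prove the first display in \eqref{eq:LimitOfModulus}; the $\rho$-case is symmetric. The heuristic is that at the KPZ scale $u=O(N^{2/3})$, the process $u\mapsto L_{{\cal L}_\lambda\to P(u)}/N^{1/3}$ converges to an Airy$_1$-type process of H\"older exponent $\tfrac{1}{2}-$, hence on the narrower window $u\in[-CN^{1/3},CN^{1/3}]$ (corresponding to Airy$_1$ argument in $[-CN^{-1/3},CN^{-1/3}]$) it is essentially constant once the deterministic linear slope is subtracted. The correction $-u(1-2\lambda)N^{1/3}/[\lambda(1-\lambda)]$ built into \eqref{eq:DefZeta1} subtracts exactly that slope, so the goal reduces to bounding the residual modulus of continuity by $o_\Pb(1)$ uniformly on $u\in[-C,C]$.

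I would follow the stationary-comparison strategy of Pimentel~\cite{Pi17}. First, augment the LPP by placing i.i.d.\ exponential weights of appropriate rate along ${\cal L}_\lambda$, producing a modified last-passage time $L^{\rm stat,\lambda}_N(u)$ that is stationary (Burke property) as $u$ varies. By the Cator-Pimentel coupling (Lemma~\ref{CouplingLemma}), the original $L_{{\cal L}_\lambda\to P(uN^{1/3})}$ and $L^{\rm stat,\lambda}_N(u)$ can be sandwiched, with the gap bounded in terms of the size of the geodesic's exit point from ${\cal L}_\lambda$.

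Second, I would exploit stationarity to derive an explicit increment representation: $L^{\rm stat,\lambda}_N(u)-L^{\rm stat,\lambda}_N(v)$ is a signed sum of boundary exponentials along the segment of ${\cal L}_\lambda$ between the exit points at $u$ and $v$. After subtracting the linear drift, this is a sum of at most $O(|u-v|\, N^{1/3})$ centered exponentials, so a maximal inequality combined with a union bound over a fine grid in $[-C,C]$ yields that $\sup_{|u|,|v|\le C}|L^{\rm stat,\lambda}_N(u)-L^{\rm stat,\lambda}_N(v)-\mathrm{drift}|$ is at most $O(N^{1/6}) = o(N^{1/3})$ with probability tending to $1$. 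Combining the sandwich with the Gaussian exit-point tails of Lemma~\ref{AlternativeExitPointLemma} (and Lemma~\ref{ExitPointOfGeneralPercolation}) then transfers this bound to the original process $L^{{\rm resc},\lambda}_N$.

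The main obstacle is the skewed geometry: ${\cal L}_\lambda$ is antidiagonal (slope $(\lambda-1)/\lambda$) rather than axis-aligned, so the stationary boundary weights, the Burke property, and the Cator-Pimentel sandwich inequality must all be carefully reformulated in this rotated coordinate system. This is precisely the ``extension of Pimentel's method to general densities'' the paper advertises, and the Gaussian exit-point bound of Lemma~\ref{AlternativeExitPointLemma} --- the announced new technical input --- is the decisive ingredient that controls the gap introduced by the sandwich uniformly over $u\in[-C,C]$.
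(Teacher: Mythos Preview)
Your strategy is the same as the paper's---stationary comparison via Lemma~\ref{CouplingLemma}, exit-point control via Lemmas~\ref{AlternativeExitPointLemma} and~\ref{ExitPointOfGeneralPercolation}, and the observation that the stationary increment over a window of width $O(N^{1/3})$ is a centered sum of $O(N^{1/3})$ exponentials, hence $O(N^{1/6})=o(N^{1/3})$ after rescaling.

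One point, however, is glossed over in a way that would break the argument as written. Lemma~\ref{CouplingLemma} does not sandwich $L^{{\rm resc},\lambda}_N$ against a \emph{single} stationary process $L^{{\rm stat},\lambda}$ up to a controllable gap; it yields only a one-sided inequality on the increment, and which side you get depends on the ordering of the exit points $Z_\lambda$ and $Z^{{\rm stat},\varrho}$. With $\varrho=\lambda$ both exit points concentrate at the same location $\xi_\lambda N$, so neither ordering holds with probability close to $1$ and no two-sided sandwich is available. The paper (following~\cite{Pi17}) resolves this by introducing \emph{two} perturbed stationary densities $\lambda_\pm=\lambda\pm rN^{-1/3}$. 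The perturbation displaces the typical stationary exit points by $\Theta(rN^{2/3})$ in opposite directions, so on the high-probability ``good event'' $G_N(r)$ of \eqref{eq:GoodEvent} the hypotheses of Lemma~\ref{CouplingLemma} hold in both directions, giving the two-sided bound of Proposition~\ref{SandwithLemma}:
\[
B^{-}_{N,\lambda}(u)-B^{-}_{N,\lambda}(v)\le L^{{\rm resc},\lambda}_N(u)-L^{{\rm resc},\lambda}_N(v)\le B^{+}_{N,\lambda}(u)-B^{+}_{N,\lambda}(v).
\]
The exit-point Lemmas~\ref{AlternativeExitPointLemma} and~\ref{ExitPointOfGeneralPercolation} then enter not to bound a ``gap'' in the sandwich, but to show $\Pb(G_N(r)^c)\to 0$ (Proposition~\ref{CrucialEventIdentification}). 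Once this is in place, your second paragraph is exactly the paper's argument (equations \eqref{eq5.92}--\eqref{eq5.94}): the slightly shifted drift from $\lambda_\pm$ contributes $O(rN^{-1/3})$ and the residual converges to $N^{-1/6}$ times a Brownian motion.
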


At time $t$, the second class particle will correspond to a point on the competition interface be at a distance $O(N^{1/3})$ around the position $P$, but with a $N$ not being fixed, rather also fluctuating of order $t^{1/3}$ around a macroscopic value. For that reason we will have to consider increments of the process in a $N^{1/3}$ neighborhood of $P$ in any directions. Thus let us extend the definition of $P$ as follow
\begin{align}\label{eq:ExtendDef}
P(U,V):= \left((1+\gamma)\tilde{N}, (1-\gamma)\tilde{N} \right)+U(1,-1) , \quad \tilde{N}:= N+V,
\end{align}
and we use the notation
\begin{equation}
\bar{P}(u,v):=P(uN^{1/3},vN^{1/3}).
\end{equation}
Accordingly, we extend the definition of the rescaled processes in ``space and time'' as
\begin{equation}\label{eqRescProcessesSpaceTime}
\begin{aligned}
 L^{{\rm resc},\lambda}_{N}(u,v)&:= \frac{L_{{\cal L}_{\lambda}\to \bar P(u,v)}-\left(\mu_0 (N+vN^{1/3}) - \frac{(1-2\lambda)}{(1-\lambda)\lambda}uN^{1/3}\right)}{N^{1/3}},\\
 L^{{\rm resc},\rho}_{N}(u,v) & := \frac{L_{{\cal L}_{\rho}\to \bar P(u,v)}- \left(\mu_0 (N+vN^{1/3}) - \frac{(1-2\rho)}{\rho(1-\rho)}uN^{1/3}\right)}{N^{1/3}},
 \end{aligned}
\end{equation}
where $\gamma,\mu_0$ as before. Below we give statements involving $L^{{\rm resc},\lambda}_{N}$ only. Similar statements hold for $L^{{\rm resc},\rho}_{N}$ as well since the two problems are the same up to exchange of the two axes.

Consider the parallelogram $\mathcal{S}_{C}: =\{P(uN^{1/3},vN^{1/3})|u\in [-C,C], v\in [-C,C]\}$. So far, we have only explored the LPP times along a diagonal of $\mathcal{S}_C$. Here, we claim that similar result holds along any line parallel to any of the diagonals inside the parallelogram $\mathcal{S}_C$.
We have $\max_{u_1,u_2\in [-C,C]}|L_N^{{\rm resc},\lambda}(u_1,0)-L_N^{{\rm resc},\lambda}(u_2,0)|$ converges in probability to $0$ when $N$ goes to $0$ by Proposition~\ref{Tightness}. We claim that this property holds for any other $v\in [-C,C]$. In fact, it is true for the increments in any space-time direction as well. Explicitly, we have the following statements.
 \begin{prop}\label{ConcBoundOnMaximum}
Take any $\delta\in (0,1/3)$. Then there exists $N_0=N_0(C)$ such that for all $N\geq N_0$, the following inequalities
 \begin{align}
 \Pb\left(\max_{u\in [-C,C]}|L_N^{{\rm resc},\lambda}(u,v)-L_N^{{\rm resc},\lambda}(0,v)|\geq \epsilon \right) \leq K_1\exp\left( - K_2\epsilon^2 \frac{N^{1/3-\delta}}{C}\right),\label{eq:ExpectedResult1}\\
 \Pb\left(\max_{v\in [-C,C]}|L_N^{{\rm resc},\lambda}(u,v)-L_N^{{\rm resc},\lambda}(u,0)|\geq \epsilon\right) \leq K_1\exp\left( - K_2\epsilon^2 \frac{N^{1/3-\delta}}{C}\right),\label{eq:ExpectedResult2}
\end{align}
hold where $K_1:=K_1(\lambda,\delta)$, $K_2:=K_2(\lambda,\delta)$ denote two large constants. The constants are uniform for $v\in [-C,C]$, resp.\ $u\in [-C,C]$.
\end{prop}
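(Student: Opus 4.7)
The plan is to adapt Pimentel's comparison method~\cite{Pi17,CP15b} to the line-to-point LPP. I focus on (\ref{eq:ExpectedResult1}); the argument for (\ref{eq:ExpectedResult2}) is entirely analogous, with the one-parameter segment chosen along $(1,1)$ rather than $(1,-1)$.

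First, I introduce an auxiliary \emph{stationary} LPP. Anchor its origin at a point sufficiently far to the southwest of the box $\{\bar P(u,v):u,v\in[-C,C]\}$, place independent exponential boundary weights with rates $1-\lambda$ and $\lambda$ on the horizontal and vertical axes through that origin, and leave the bulk weights $\mathrm{Exp}(1)$ unchanged. Denote the resulting LPP time from this boundary by $L^{\rm stat}_\lambda$. Burke's property then implies that the centered increment $L^{\rm stat}_\lambda(\bar P(u,v))-L^{\rm stat}_\lambda(\bar P(0,v))-\E[\cdot]$ can be realized as a sum of $O(|u|N^{1/3})$ independent centered sub-exponential variables, and the drift $\E[\cdot]$ equals precisely $-\frac{u(1-2\lambda)N^{1/3}}{\lambda(1-\lambda)}$ -- which matches the linear term subtracted in the centering of $L^{{\rm resc},\lambda}_N$. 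Bernstein's inequality then gives
\begin{equation*}
\Pb\bigl(|L^{{\rm resc},\lambda,{\rm stat}}_N(u,v)-L^{{\rm resc},\lambda,{\rm stat}}_N(0,v)|\geq\epsilon/2\bigr)\leq K_1\exp\bigl(-K_2\epsilon^2 N^{1/3}/C\bigr)
\end{equation*}
pointwise in $u\in[-C,C]$ (for $\epsilon$ not too large), where $L^{{\rm resc},\lambda,{\rm stat}}_N$ denotes the rescaled process in the stationary model.

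Second, I transfer the stationary bound to the original line-to-point process via the Cator--Pimentel coupling (Lemma~\ref{CouplingLemma}). On the event that the geodesics from $\mathcal{L}_\lambda$ to both $\bar P(u,v)$ and $\bar P(0,v)$ stay within a typical window -- equivalently, their exit points relative to the stationary boundary are not atypical -- the increment $L_{\mathcal{L}_\lambda\to\bar P(u,v)}-L_{\mathcal{L}_\lambda\to\bar P(0,v)}$ coincides with its stationary counterpart. The complementary event has Gaussian-small probability by Lemma~\ref{AlternativeExitPointLemma}. Third, I upgrade the pointwise estimate to the supremum over $u\in[-C,C]$ by discretizing into $M=\lceil CN^{1/3-\delta/2}/\epsilon\rceil$ equispaced points, applying a union bound, and controlling the within-cell oscillation by a further Bernstein estimate on the boundary walks of the stationary comparison. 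The resulting exponent $\epsilon^2 N^{1/3-\delta}/C$ incorporates the $N^{-\delta}$ slack that absorbs both $\log M$ and the exit-point error.

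The main obstacle is the uniform execution of the Cator--Pimentel comparison: the exit points of the geodesics from $\mathcal{L}_\lambda$ to \emph{every} $\bar P(u,v)$ with $u,v\in[-C,C]$ must be simultaneously controlled in a common window. This is what forces the use of the \emph{Gaussian} exit-point bound of Lemma~\ref{AlternativeExitPointLemma}, since a mere polynomial tail would not survive the $O(N^{1/3})$ union bound in the discretization. Adapting the half-line Gaussian exit estimate to the asymmetric-density $\lambda$ line-to-point setting, which is the content of that lemma, is the genuine technical input required; once it is in hand, everything else reduces to Burke's property plus Bernstein-type concentration, with the standard proviso that one must carefully align the orientations of the boundary walks in the stationary model with the ordering of the endpoints $\bar P(u,v)$ as $u$ varies.
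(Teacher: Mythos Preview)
Your proposal has a genuine gap in the comparison step. You introduce a \emph{single} stationary model with boundary density exactly $\lambda$ and then assert that, on a good event, the line-to-point increment ``coincides with its stationary counterpart.'' The Cator--Pimentel coupling (Lemma~\ref{CouplingLemma}) never gives equality: under $Z^{{\rm stat}}(x_1)\leq Z_\lambda(x_2)$ it yields only an \emph{upper} bound on the increment, and under $Z_\lambda(x_1)\leq Z^{{\rm stat}}(x_2)$ only a \emph{lower} bound. With a single density $\lambda$, the stationary and line-to-point exit points both concentrate around the \emph{same} location $\xi_\lambda N$ with $N^{2/3}$ fluctuations, while moving the endpoint by $uN^{1/3}$ with $|u|\leq C$ shifts the exit points by far less than $N^{2/3}$. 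Hence neither ordering holds with probability close to one, and you do not get a two-sided sandwich.

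The paper's fix is to run \emph{two} stationary models with perturbed densities $\lambda_\pm=\lambda\pm r N^{-1/3}$. This tilts the characteristic directions so that $Z^{{\rm stat},\lambda_-}$ and $Z^{{\rm stat},\lambda_+}$ concentrate on opposite sides of $Z_\lambda$, separated by order $rN^{2/3}$; the good event $G_N(r)$ of (\ref{eq:GoodEvent}) then has probability $\geq 1-K_1 e^{-K_2 r^2}$ by Proposition~\ref{CrucialEventIdentification}, and on $G_N(r)$ Proposition~\ref{SandwithLemma} sandwiches the increment between $B^-_{N,\lambda}$ and $B^+_{N,\lambda}$. The price is that the stationary drift is now $\tfrac{1-2\lambda_\pm}{\lambda_\pm(1-\lambda_\pm)}$ rather than $\tfrac{1-2\lambda}{\lambda(1-\lambda)}$, producing a mismatch of size $O(ru N^{-1/3})$ after rescaling. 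Choosing $r=N^{1/6-\delta/2}/\sqrt{C}$ makes this mismatch vanish while keeping $\Pb(G_N(r)^c)\leq K_1 e^{-K_2 N^{1/3-\delta}/C}$; this is exactly where the exponent $N^{1/3-\delta}$ (rather than $N^{1/3}$) in the statement comes from, a point your sketch does not account for. Two smaller remarks: the supremum over $u$ is handled in the paper by Doob's maximal inequality for the exponential of the centered stationary walk, avoiding your discretization; and (\ref{eq:ExpectedResult2}) is \emph{not} obtained by rerunning the argument along $(1,1)$ (time-like increments of the stationary model are not handled directly by Burke in the same form), but by decomposing the time-like increment into two space-like pieces as in (\ref{eq5.101}) and applying (\ref{eq:ExpectedResult1}) twice.
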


\begin{cor}\label{UsefulCorBis}
Fix any $C>0$. Then, for any $\epsilon>0$, we have
\begin{align}
\lim_{N\to \infty}\Pb\left(\max_{u,v\in [-C,C]}\left|L_N^{{\rm resc},\lambda}(u,v)-L_N^{{\rm resc},\lambda}(0,0)\right|\geq \epsilon\right) =0.
\end{align}
\end{cor}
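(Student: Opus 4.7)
The plan is to deduce the corollary from Proposition~\ref{ConcBoundOnMaximum} via a finite-grid argument in the $v$-direction, combined with the monotonicity of the unscaled LPP time in $v$ to interpolate between grid points. The starting decomposition is
\[
|L_N^{{\rm resc},\lambda}(u,v)-L_N^{{\rm resc},\lambda}(0,0)|\leq|L_N^{{\rm resc},\lambda}(u,v)-L_N^{{\rm resc},\lambda}(0,v)|+|L_N^{{\rm resc},\lambda}(0,v)-L_N^{{\rm resc},\lambda}(0,0)|,
\]
whose second summand has supremum over $v\in[-C,C]$ tending to zero in probability by \eqref{eq:ExpectedResult2} applied at $u=0$, so it remains to control the first summand uniformly in $(u,v)\in[-C,C]^2$.

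Given $\epsilon>0$, I would choose an integer $M\geq 20C\mu_0/\epsilon$, put $v_i:=-C+2Ci/M$ for $0\leq i\leq M$ so that the spacing $\eta:=2C/M$ satisfies $\mu_0\eta\leq\epsilon/10$, and apply \eqref{eq:ExpectedResult1} together with a union bound over the $M+1$ grid values:
\[
\Pb\Bigl(\max_{0\leq i\leq M}\max_{u\in[-C,C]}|L_N^{{\rm resc},\lambda}(u,v_i)-L_N^{{\rm resc},\lambda}(0,v_i)|\geq\epsilon/10\Bigr)\leq (M+1)\,K_1\exp\bigl(-K_2\epsilon^2 N^{1/3-\delta}/(100C)\bigr)\to 0.
\]
Since both coordinates of $\bar P(u,v)$ are strictly increasing in $v$ (as $1\pm\gamma>0$ under $0<\lambda<\rho<1$), the unscaled quantity $L_{{\cal L}_\lambda\to\bar P(u,v)}$ is nondecreasing in $v$, and subtracting the affine normalization in \eqref{eqRescProcessesSpaceTime} yields for $v\in[v_i,v_{i+1}]$ the sandwich
\[
-\mu_0\eta\leq L_N^{{\rm resc},\lambda}(u,v)-L_N^{{\rm resc},\lambda}(u,v_i)\leq L_N^{{\rm resc},\lambda}(u,v_{i+1})-L_N^{{\rm resc},\lambda}(u,v_i)+\mu_0\eta.
\]
Hence $|L_N^{{\rm resc},\lambda}(u,v)-L_N^{{\rm resc},\lambda}(u,v_i)|\leq |L_N^{{\rm resc},\lambda}(u,v_{i+1})-L_N^{{\rm resc},\lambda}(u,v_i)|+\mu_0\eta$. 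Inserting $L_N^{{\rm resc},\lambda}(0,v_i),\,L_N^{{\rm resc},\lambda}(0,v_{i+1})$ and $L_N^{{\rm resc},\lambda}(0,0)$ in the triangle inequality reduces this discrete difference, as well as the residual $|L_N^{{\rm resc},\lambda}(0,v_i)-L_N^{{\rm resc},\lambda}(0,v)|$, to quantities controlled by the grid bound above or by \eqref{eq:ExpectedResult2} at $u=0$. Adding the pieces with the chosen constants yields a total bound of at most $\epsilon$ on an event of probability tending to $1$, which gives the claim.

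The main obstacle is this interpolation step: the rescaled process $v\mapsto L_N^{{\rm resc},\lambda}(u,v)$ is not itself monotone because of the subtracted linear term in $v$, so one cannot directly bound its oscillation on $[v_i,v_{i+1}]$ by the endpoint increment. Fortunately the unscaled LPP time is monotone in $v$, which furnishes the two-sided sandwich above and reduces the continuous maximum to a maximum over finitely many grid points at the cost of an additive error $\mu_0\eta\leq\epsilon/10$ that is absorbed by the choice of $M$.
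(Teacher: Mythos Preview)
Your argument is correct, but it takes a longer route than the paper's. The paper simply observes that the endpoints $\bar P(u,v)$ are lattice points, so as $v$ ranges over $[-C,C]$ there are only $O(N^{1/3})$ distinct values (namely $v\in[-C,C]\cap N^{-1/3}\Z$). It then takes a union bound over these $O(N^{1/3})$ values of $v$, applying \eqref{eq:ExpectedResult1} for the $u$-increment and \eqref{eq:ExpectedResult2} for the $v$-increment at each one; the exponential bound $K_1\exp(-K_2\epsilon^2 N^{1/3-\delta}/C)$ from Proposition~\ref{ConcBoundOnMaximum} easily beats the polynomial factor $O(N^{1/3})$, so the sum still vanishes. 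No interpolation in $v$ is needed at all.

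Your approach instead fixes a coarse grid of $M+1$ points independent of $N$ and uses the monotonicity of the unscaled LPP time in $v$ to sandwich the rescaled process between grid values up to an additive $\mu_0\eta$. This is more work but also more robust: it would go through even if Proposition~\ref{ConcBoundOnMaximum} only gave a qualitative $o(1)$ bound (uniform in $v$) rather than an exponential-in-$N$ one, since your union bound is over a fixed finite set. One small caution: with your specific choice of $\epsilon/10$ at every step, the constants from the repeated triangle inequalities accumulate to a bit more than $\epsilon$; you should tighten the threshold (e.g.\ use $\epsilon/100$ throughout, or just say ``choose $\epsilon'>0$ small enough'') so that the pieces genuinely sum to at most $\epsilon$.
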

\begin{proof}
We first note
\begin{equation}
\begin{aligned}
&\Pb\left(\max_{u,v\in [-C,C]}\left|L_N^{{\rm resc},\lambda}(u,v)-L_N^{{\rm resc},\lambda}(0,0)\right|\geq \epsilon\right)\\
\leq &\sum_{v\in [-C,C]\cap N^{-1/3}\Z} \Pb\left(\max_{u\in [-C,C]}\left|L_N^{{\rm resc},\lambda}(u,v)-L_N^{{\rm resc},\lambda}(0,v)\right|\geq \epsilon/2\right)\\
&+\sum_{v\in [-C,C]\cap N^{-1/3}\Z} \Pb\left(\left|L_N^{{\rm resc},\lambda}(0,v)-L_N^{{\rm resc},\lambda}(0,0)\right|\geq \epsilon/2\right)\\
\end{aligned}
\end{equation}
By virtue of Proposition~\ref{ConcBoundOnMaximum}, the right side of the above inequality is bounded by $4CK_1N^{1/3}\exp(-K_2\epsilon^2 N^{1/3-\delta}/2)$ (for some $0<\delta<1/3$) which converges to $0$ as $N$ goes to $\infty$.
\end{proof}

 \subsection{Proof of Theorem~\ref{MainTheoremForDifferentDensity}}\label{SectProofMainResult}
The number of jumps made by the second class particle until time $t$ is \mbox{$2t/\mu_0+\Or(t^{1/3})$}. For that reason, we introduce the notation $N=t/\mu_0$. Since the trajectory of the second class particle is the same as the trajectory of the competition interface, the second class particle after $2N$ steps will be at position $(I_{2N},J_{2N})=P({\cal U}N^{1/3},0)$ for some ${\cal U}$. First we study ${\cal U}$. This will be the starting point to get the distribution of the second class particle at time $t$.

Observe that, by the definition of the competition interface, for any $u\in \R$, we have
 \begin{equation}
 \begin{aligned}\label{eq:CompetitionProperty}
 \Pb\left(P(uN^{1/3},0\right)\in \Gamma^{\infty}_{-})&\leq \Pb\left(I_{2N}-J_{2N}\leq 2\gamma N +2 u N^{1/3}\right)\\ & \leq \Pb\left(P(uN^{1/3}+1,0)\in \Gamma^{\infty}_{-}\right).
 \end{aligned}
 \end{equation}
Thus the limiting distribution of $I_{2N}-J_{2N}$ is the same as the limit of $\Pb\left(P(uN^{1/3},0\right)\in \Gamma^{\infty}_{-})$. Further, the event $P(uN^{1/3},0)\in \Gamma^{\infty}_{-}$ holds whenever $L_{{\cal L}_{\rho}\to P(uN^{1/3},0)}\geq L_{{\cal L}_{\lambda}\to P(uN^{1/3},0)}$. Rewritten in terms of the rescaled LPP, we have obtained
\begin{equation}
\begin{aligned}
&\lim_{N\to\infty}\Pb\left(I_{2N}-J_{2N}\leq 2\gamma N +2 u N^{1/3}\right) \\
=& \lim_{N\to\infty} \Pb\left(L_N^{{\rm resc},\lambda}(u,0)-\frac{1-2\lambda}{\lambda(1-\lambda)}u\leq L_N^{{\rm resc},\rho}(u,0)-\frac{1-2\rho}{\rho(1-\rho)}u\right),\\
=&\lim_{N\to\infty} \Pb\left(L_N^{{\rm resc},\lambda}(u,0)-L_N^{{\rm resc},\rho}(u,0)\leq u \Upsilon\right),
\end{aligned}
\end{equation}
where we defined
\begin{equation}
\Upsilon=\frac{1-2\lambda}{\lambda(1-\lambda)} - \frac{1-2\rho}{\rho(1-\rho)}>0
\end{equation}
since $\lambda<\rho$.
By Proposition~\ref{ComponentConvergenceProp}, $L_N^{{\rm resc},\lambda}(u,0)$ and $L_N^{{\rm resc},\rho}(u,0)$ converges to $u$-independent non-degenerate distribution functions. Consequently, for any $\e>0$, there exists a finite $C_1=C_1(\e)>0$ such that $\Pb\left(I_{2N}-J_{2N}\geq 2\gamma N +C_1 N^{1/3}\right)\leq \e$ and $\Pb\left(I_{2N}-J_{2N}\leq 2\gamma N -C_1 N^{1/3}\right)\leq \e$ for all $N$ large enough.

Since ${\cal U}$ is characterized by the requirement
\begin{equation}\label{eq5.15}
L_N^{{\rm resc},\lambda}({\cal U},0)-\frac{1-2\lambda}{\lambda(1-\lambda)}{\cal U} =L_N^{{\rm resc},\rho}({\cal U},0)-\frac{1-2\rho}{\rho(1-\rho)}{\cal U}
\end{equation}
up to $\Or(N^{-1/3})$, we have
\begin{equation}\label{eq5.21}
\Pb(|{\cal U}|\geq 2C_1)\leq 2\e
\end{equation}
for all $N$ large enough.

Let us now approximate the distribution of ${\cal U}$ by the rescaled LPP at the fixed point $P(0,0)$. By Proposition~\ref{ComponentConvergenceProp}, the random variables
\begin{equation}\label{eq5.22}
\chi_\lambda:=L_N^{{\rm resc},\lambda}(0,0)\quad\textrm{and}\quad \chi_\rho:=L_N^{{\rm resc},\rho}(0,0)
\end{equation}
have non-degenerate limiting distributions and they are asymptotically independent. By Proposition~\ref{Tightness}, uniformly for $\cal U$ in a bounded interval, for any $\tilde\e>0$,
\begin{equation}\label{eq5.23}
\Pb(|L_N^{{\rm resc},\lambda}({\cal U},0)-\chi_\lambda|\geq \tilde\e)\leq \e\quad\textrm{and}\quad \Pb(|L_N^{{\rm resc},\rho}({\cal U},0)-\chi_\rho|\geq \tilde\e)\leq \e
\end{equation}
for all $N$ large enough. This and (\ref{eq5.21}) imply that, for any $\e'>0$,
\begin{equation}
\Pb\left(\left|{\cal U}-\frac{\chi_\lambda-\chi_\rho}{\Upsilon}\right|\geq \e'\right)\leq 4\e
\end{equation}
for all $N$ large enough. Statement as this will be also written in a compact way as ${\cal U} = \Upsilon^{-1} (\chi_\lambda-\chi_\rho)+o(1)$ where $o(1)$ means random variable which converges in probability to $0$ as $N\to \infty$.

The next quantity we have to control is the difference of the  number of steps to the right and to the left made by the second class particle at time $t$. Let use introduce the random variables $\widetilde {\cal U}$ and $\cal V$ by the relation
\begin{equation}
(I(t),J(t))=P(\widetilde {\cal U} N^{1/3},{\cal V} N^{1/3}).
\end{equation}
First notice that the time when the second class particle made $2N$ steps is given by
\begin{equation}\label{eq5.18}
\tau=\mu_0 N + \left[L_N^{{\rm resc},\lambda}({\cal U},0)-\frac{1-2\lambda}{\lambda(1-\lambda)}{\cal U}\right] N^{1/3}+o(1).
\end{equation}
By (\ref{eq5.21})-(\ref{eq5.23}), there exists a $C_3=C_3(\e)$ such that
\begin{equation}\label{eq5.26}
\Pb\left(\left|\tau- t\right|\geq C_3 N^{1/3}\right)\leq \e
\end{equation}
for all $N$ large enough. The number of jumps of a second class particle is bounded by a Poisson process of intensity $2$ (it has his internal clock, but can be moved by the first class particles, and both have clock rate $1$). Thus, together with (\ref{eq5.26}), there exists a $C_4=C_4(\e)$ such that
\begin{equation}
\Pb(|{\cal V}|\geq C_4)\leq \e
\end{equation}
for all $N$ large enough. Also, since the distance made between $I_{2N}-J_{2N}$ and $I(t)-J(t)$ is bounded by the number of steps by the second class particle during $[\tau,t]$, we also have
\begin{equation}
\Pb(|\widetilde {\cal U}|\geq C_4+2C_1)\leq 3\e
\end{equation}
for all $N$ large enough. Thus there exists a $C=C(\e)$ such that for all $N$ large enough,
\begin{equation}\label{eq:OrderOfShift}
\Pb\left(\max\{|\widetilde{\mathcal{U}}|, |\mathcal{V}|\}\leq C\right)\geq 1-4\e.
\end{equation}

We have $t=L_{{\cal L}_\lambda\to \bar P(\widetilde {\cal U},{\cal V})} =L_{{\cal L}_\rho\to \bar P(\widetilde {\cal U},{\cal V})}$, where equalities hold up to $\Or(1)$.
In terms of scaled variables, it means
\begin{equation}
L^{{\rm resc},\lambda}_N(\widetilde {\cal U},{\cal V})+\mu_0 {\cal V}-\frac{1-2\lambda}{\lambda(1-\lambda)}\widetilde {\cal U}=0.
\end{equation}
Let us prove that, for any $\bar\e>0$,
\begin{equation}\label{eq5.29}
\Pb\left(\left|L^{{\rm resc},\lambda}_N(0,0)+\mu_0{\cal V}-\frac{1-2\lambda}{\lambda(1-\lambda)}\widetilde {\cal U}\right|\geq \bar\e\right)\leq 5\e,
\end{equation}
for any $N$ large enough. This is true, since for any $N$ large enough,
\begin{equation}
\begin{aligned}
&\Pb\left(|L^{{\rm resc},\lambda}_N(\widetilde {\cal U},{\cal V})-L^{{\rm resc},\lambda}_N(0,0)|\geq \bar\e\right)\leq \Pb\left(\max\{|\widetilde{\mathcal{U}}|, |\mathcal{V}|\}\geq C\right)\\
&+\Pb\left(\max_{u,v\in[-C,C]}|L^{{\rm resc},\lambda}_N(u,v)-L^{{\rm resc},\lambda}_N(0,0)|\geq \bar\e\right)\leq 5\e,
\end{aligned}
\end{equation}
where we applied Corollary~\ref{UsefulCorBis} to bound the last term by $\e$.

Recall the definition of $\chi_\lambda$ and $\chi_\rho$ in (\ref{eq5.22}). Then (\ref{eq5.29}) (and the same statement for $\lambda$ replaced by $\rho$) writes
\begin{equation}
\begin{aligned}
\chi_\lambda+o(1)& = \frac{1-2\lambda}{\lambda(1-\lambda)}\widetilde {\cal U}-\mu_0 {\cal V},\\
\chi_\rho+o(1)& = \frac{1-2\rho}{\rho(1-\rho)}\widetilde {\cal U}-\mu_0 {\cal V}.
\end{aligned}
\end{equation}
 This gives
\begin{equation}
\begin{aligned}
\widetilde {\cal U}&=\frac{\chi_\lambda-\chi_\rho}{\Upsilon}+o(1),\\
{\cal V}&=\frac{1}{\mu_0 \Upsilon}\left(\frac{1-2\rho}{\rho(1-\rho)}\chi_\lambda-\frac{1-2\lambda}{\lambda(1-\lambda)}\chi_\rho\right)+o(1).
\end{aligned}
\end{equation}
Recalling that $X_t=I(t)-J(t)=2\gamma N + 2(\gamma {\cal V}+\widetilde {\cal U}) N^{1/3}$, we finally get
\begin{equation}
\begin{aligned}
\frac{X_t-\mathbf{v}t}{t^{1/3}} &= \mu_0^{-1/3} 2(\widetilde {\cal U}+\gamma {\cal V})+o(1)\\
&=\frac{2}{\mu_0^{4/3}\Upsilon}\left(\frac{\chi_\lambda}{\rho(1-\rho)}-\frac{\chi_\rho}{\lambda(1-\lambda)}\right)+o(1).
\end{aligned}
\end{equation}
By Proposition~\ref{ComponentConvergenceProp}, as $N\to\infty$, $\chi_\lambda \to \sigma_1 2^{-2/3}\xi_{\rm GOE}^{(1)}$ and $\chi_\rho \to \sigma_2 2^{-2/3}\xi_{\rm GOE}^{(2)}$, with $\xi_{\rm GOE}^{(i)}$, $i=1,2$, being GOE Tracy-Widom distributed random variables~\cite{TW96} and asymptotically independent. The result (\ref{eq:NumberoOfJumpsDistribution}) follows in the same way from the equality $N_t=P(\widetilde {\cal U}N^{1/3},{\cal V}N^{1/3})=2N+2{\cal V} N^{1/3}$. This ends the proof of the main theorem.

\subsection{Proofs of the preliminary results}\label{AuxilliaryResults}
First we deal with Proposition~\ref{Tightness}. We prove the result only in the case of ${\cal L}_\lambda$, the case for ${\cal L}_\rho$ is proven in the same way. The key ingredient is the coupling between the last passage percolation with two different initial profiles  as in Lemma~2.1 of~\cite{Pi17}, one of which being stationary.

\subsubsection{Comparison with stationary LPP}
Recall that $P=\left((1+\gamma)N, (1-\gamma)N\right)$. As already discussed
\begin{equation}
\argmax_{k\geq 0}\left\{L_{\left(\lambda^{-1}(\lambda-1) k, k\right)\to P}\right\}
\end{equation}
lies in the neighborhood of the point $\xi_\lambda$ defined in (\ref{eqB}).

Using Lemma~4.2 of~\cite{BCS06} one can define a stationary model where the starting line of the LPP is any right/down path, obtained by adding appropriate random weights to the points on the line. Let us define it for
\begin{equation}
{\cal L}^\infty_\lambda=\left\{\left(\left\lfloor\tfrac{\lambda-1}{\lambda} x\right\rfloor, x\right)\big| x\in \Z\right\}.
\end{equation}
Fix any positive real number $\varrho\in (0,1)$. Let us consider a family of independent random variables
\begin{equation}
\{p_i,q_i,i\in\Z\},\quad p_i\sim {\rm Exp}(1-\varrho),\ q_i\sim {\rm Exp}(\varrho).
\end{equation}
Then define the weights $\{\omega^{\varrho}(k)\big| k\in \Z\}$ given by
\begin{align}\label{eq:RandomWeightsAlongAntiDiagonal}
\omega^{\varrho}(k):= \left\{\begin{array}{ll}
\sum_{i=\lfloor (\lambda-1) k/\lambda\rfloor+1}^0 p_i-\sum_{i=k+1}^0 q_i & \text{for } k<0,\\
0 & \text{for } k=0,\\
-\sum_{i=1}^{\lfloor (\lambda-1) k/\lambda\rfloor} p_i+\sum_{i=1}^k q_i & \text{for } k>0.
\end{array}\right.
\end{align}
Using these weights, we define the stationary last passage time $L^{{\rm stat},\varrho}$ from ${\cal L}^\infty_\lambda$ in the neighborhood of the point $P$ as follows
\begin{align}\label{eq:LastPassageTimeFromLHalf}
L^{{\rm stat},\varrho}_{{\cal L}^\infty_\lambda\to P(x)} := \max_{k\in\Z}\{L_{\left(\lambda^{-1}(\lambda-1)k,k\right)\to P(x)}+\omega^{\varrho}(k)\}.
\end{align}
The name stationary follows from the fact, by Lemma~4.2 of~\cite{BCS06}, for any $m,n\geq 0$ we have the property
\begin{equation}\label{eq:StatProperty}
k\mapsto L^{{\rm stat},\varrho}_{{\cal L}^\infty_\lambda\to (m,n)}-L^{{\rm stat},\varrho}_{{\cal L}^\infty_\lambda\to (m+k,n)}\sim \sum_{l=1}^k \zeta_l,
\end{equation}
with $\zeta_l$ i.i.d.\ ${\rm Exp}(1-\lambda)$ random variables. Furthermore, we denote the exit point of the last passage path associated with $L^{{\rm stat},\varrho}$ from the line ${\cal L}^\infty_\lambda$ to the point $P(x)$ by $Z^{{\rm stat},\varrho}(x)$,
more precisely,
\begin{equation}
Z^{{\rm stat},\varrho}(x)=\argmax_{k\in\Z}\{L_{\left(\lambda^{-1}(\lambda-1)k,k\right)\to P(x)}+\omega^{\varrho}(k)\}.
\end{equation}
From stationarity we have the following translation-invariance property
\begin{equation}\label{eqTranslInv}
Z^{{\rm stat},\varrho}(x) \stackrel{d}{=} Z^{{\rm stat},\varrho}(0)+x.
\end{equation}
Similarly, we define
\begin{equation}\label{eq5.37}
Z_\lambda(x)=\argmax_{k\geq 0}\{L_{\left(\lambda^{-1}(\lambda-1)k,k\right)\to P(x)}\}.
\end{equation}
The characteristic direction of the last passage percolation $L^{{\rm stat},\varrho}$ is given by $\left((1-\varrho)^2, \varrho^2\right)$. The line joining the point
\begin{equation}
Q_{N}(\xi_{\lambda}):=((\lambda-1)\lambda^{-1} \xi_{\lambda}N, \xi_{\lambda}N)
\end{equation} and $P$ has direction $((1-\lambda)^2, \lambda^2)$. Hence, the characteristic line in the stationary LPP $L^{{\rm stat},\lambda}_{{\cal L}^\infty_\lambda\to P}$ leaves ${\cal L}^\infty_\lambda$ at $Q_{N}(\xi_{\lambda})\in {\cal L}_\lambda$.

Now, we briefly describe how we complete the proof of Proposition~\ref{Tightness}. Explicit arguments will be given towards the end of this section. For $C,r\in \R$, we define
\begin{equation}
\lambda_{\pm}(r) := \lambda \pm \frac{r}{N^{1/3}},
\end{equation}
and the good event,
\begin{equation}\label{eq:GoodEvent}
G_{N}(r):= \left\{Z^{{\rm stat},\lambda_-}(CN^{1/3})\leq Z_\lambda(0)\right\} \cap \{Z_\lambda(CN^{1/3})\leq Z^{{\rm stat},\lambda_+}(0)\}.
\end{equation}
One can bound $\Pb(\Omega^N_{\lambda}[-C,C]\geq \epsilon)$ by the sum of $\Pb((G_N(r))^c)$ and $\Pb(G_N(r)\cap \{\Omega^{N}_{\lambda}[-C,C]\geq \epsilon\})$. We show that $\Pb(G_N(r)\cap \{\Omega^{N}_{\lambda}[-C,C]\geq \epsilon\})$ goes to $0$ as $N$ increases using Proposition~\ref{SandwithLemma} which leans on the coupling inequalities of Lemma~\ref{CouplingLemma}. To show $\Pb((G_N(r))^c)$ converges to $0$ as $r$ increases, we use Proposition~\ref{CrucialEventIdentification} which will be proved using Lemma~\ref{AlternativeExitPointLemma} and Lemma~\ref{ExitPointOfGeneralPercolation}.

First lemma corresponds to a coupling between the last passage percolation with boundary conditions $\omega$ and the ones without them. This is an extension of Lemma~2.1 of~\cite{Pi17} to the case of general $\varrho$.
\begin{lem}\label{CouplingLemma}
Fix some $0<x_1<x_2$. Now, whenever $Z^{{\rm stat},\varrho}(x_1)\leq Z_\lambda(x_2)$, then we get
\begin{align}\label{eq:UpsideCoupling}
L_{{\cal L}_{\lambda}\to P(x_2)} - L_{{\cal L}_{\lambda}\to P(x_1)}\leq L^{{\rm stat},\varrho}_{{\cal L}_{\lambda}^\infty \to P(x_2)} - L^{{\rm stat},\varrho}_{{\cal L}_{\lambda}^\infty\to P(x_1)}.
\end{align}
In contrast, when $Z_{\lambda}(x_1)\leq Z^{{\rm stat},\varrho}(x_2)$ holds, then we have
\begin{align}\label{eq:DownsideCoupling}
 L^{{\rm stat},\varrho}_{{\cal L}_{\lambda}^\infty \to P(x_2)} - L^{{\rm stat},\varrho}_{{\cal L}_{\lambda}^\infty\to P(x_1)}\leq L_{{\cal L}_{\lambda}\to P(x_2)} - L_{{\cal L}_{\lambda}\to P(x_1)}.
\end{align}
\end{lem}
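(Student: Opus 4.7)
My plan is to derive both inequalities via the classical cross-lemma path-swap argument for LPP, using the argmax characterization of the exit points $Z_\lambda$ and $Z^{{\rm stat},\varrho}$. I focus on \eqref{eq:UpsideCoupling}; inequality \eqref{eq:DownsideCoupling} follows by the symmetric argument with the roles of $L$ and $L^{{\rm stat},\varrho}$ interchanged. Set $k^* := Z_\lambda(x_2) \geq 0$ and $k_* := Z^{{\rm stat},\varrho}(x_1)$, so that $k_* \leq k^*$ by hypothesis. Write $A_i := (\lfloor \lambda^{-1}(\lambda-1) k_i \rfloor, k_i) \in {\cal L}^\infty_\lambda$ with $k_1 = k_*$, $k_2 = k^*$, and $B_i := P(x_i)$ for $i = 1, 2$.

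By definition of the argmax,
\begin{equation*}
L^{{\rm stat},\varrho}_{{\cal L}^\infty_\lambda \to P(x_1)} = L_{A_1 \to B_1} + \omega^\varrho(k_*), \qquad L_{{\cal L}_\lambda \to P(x_2)} = L_{A_2 \to B_2},
\end{equation*}
while taking suboptimal exit points in the maxima defining the other two LPP times yields
\begin{equation*}
L_{{\cal L}_\lambda \to P(x_1)} \geq L_{A_2 \to B_1}, \qquad L^{{\rm stat},\varrho}_{{\cal L}^\infty_\lambda \to P(x_2)} \geq L_{A_1 \to B_2} + \omega^\varrho(k_*),
\end{equation*}
where the first bound uses $k^* \geq 0$ so that $A_2 \in {\cal L}_\lambda$. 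The inequality \eqref{eq:UpsideCoupling} then follows upon combining these with the cross-lemma
\begin{equation*}
L_{A_1 \to B_1} + L_{A_2 \to B_2} \leq L_{A_1 \to B_2} + L_{A_2 \to B_1},
\end{equation*}
to obtain $L^{{\rm stat},\varrho}_{{\cal L}^\infty_\lambda \to P(x_2)} + L_{{\cal L}_\lambda \to P(x_1)} \geq L^{{\rm stat},\varrho}_{{\cal L}^\infty_\lambda \to P(x_1)} + L_{{\cal L}_\lambda \to P(x_2)}$, which rearranges into the claim.

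The cross-lemma itself is the heart of the argument and the main obstacle. I would prove it by showing that the maximizing paths $\pi^{\max}_{A_1 \to B_1}$ and $\pi^{\max}_{A_2 \to B_2}$ must share a common lattice point $C$; swapping their post-$C$ segments then produces admissible up-right paths from $A_1$ to $B_2$ and from $A_2$ to $B_1$ whose total weight equals $L_{A_1 \to B_1} + L_{A_2 \to B_2}$ (each shared vertex is visited the same number of times before and after the swap), and the desired inequality follows from the maximality of $L_{A_1 \to B_2}$ and $L_{A_2 \to B_1}$. The crossing itself is forced by the geometric configuration of the four points: $k_* \leq k^*$ places $A_1$ weakly south-east of $A_2$ on the anti-diagonal ${\cal L}^\infty_\lambda$, while $x_1 < x_2$ places $B_1$ strictly north-west of $B_2$. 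Tracking the $x$-coordinate of each up-right path as a function of $y$ over the overlapping window $y \in [k^*, (1-\gamma)N - x_2]$, one checks that $\pi^{\max}_{A_1 \to B_1}$ enters this window at $y = k^*$ at an $x$-coordinate at least that of $\pi^{\max}_{A_2 \to B_2}$, and leaves it at $y = (1-\gamma)N - x_2$ strictly to the left of $\pi^{\max}_{A_2 \to B_2}$; since the $x$-coordinates are integer-valued and evolve by unit increments, the two occupied ranges must overlap at some intermediate $y$-level, supplying the common vertex $C$. A minor technical point is that $k_* = Z^{{\rm stat},\varrho}(x_1)$ may a priori be negative, but this poses no issue since ${\cal L}^\infty_\lambda$ is two-sided and admissibility of up-right paths from $A_1$ to any $B_i$ is preserved.
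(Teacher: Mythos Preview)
Your proof is correct and follows essentially the same approach as the paper's: both identify the crossing point of the two geodesics $\pi^{\max}_{A_1\to B_1}$ and $\pi^{\max}_{A_2\to B_2}$ forced by the geometry $k_*\le k^*$, $x_1<x_2$, then swap path segments at that point and use super/subadditivity. The paper writes the four relations directly through the crossing point $\mathbf{c}_\lambda$ and subtracts, whereas you package the crossing argument as a separate cross-lemma and combine it with the argmax identities; the algebra and the underlying idea are identical.
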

\begin{proof}
One can find similar result in Lemma~1 of~\cite{CP15b} for the exit point in Hammerseley process. Furthermore,~\cite{Pi17} it has been generalized in the case of line to point last passage percolation with any two initial profiles along the anti-diagonal. Our proof closely follows the arguments of Lemma~2.1 of~\cite{Pi17}. We prove in detail only \eqref{eq:UpsideCoupling} since the proof of \eqref{eq:DownsideCoupling} is similar.

We denote $z_2= Z_\lambda(x_2)$ and $z_1=Z^{{\rm stat},\varrho}(x_1)$, see Figure~\ref{FigCrossing} for an illustration.
\begin{figure}
\begin{center}
\psfrag{P1}[cl]{$P(x_1)$}
\psfrag{P2}[cl]{$P(x_2)$}
\psfrag{Z1}[cr]{$z_2=Z_\lambda(x_2)$}
\psfrag{Z2}[cr]{$z_1=Z^{{\rm stat},\varrho}(x_1)$}
\psfrag{c}[cr]{$\mathbf{c}_\lambda$}
\psfrag{L}[cl]{${\cal L}_\lambda^\infty$}
\includegraphics[height=6cm]{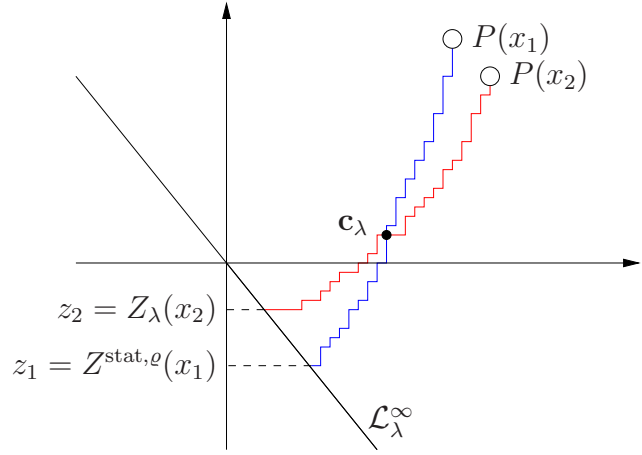}
\caption{Illustration of the geometric setting of Lemma~\ref{CouplingLemma}.}
\label{FigCrossing}
\end{center}
\end{figure}
Under the constraint $z_1\leq z_2$, the last passage path from $\left(\lambda^{-1}(\lambda-1)z_1,z_1\right)$ to $P(x_1)$ crosses somewhere in between the
last passage path from $(\lambda^{-1}(\lambda-1)z_2,z_2)$ to $P(x_2)$. We call $\mathbf{c}_{\lambda}$ the point where two paths meet.
Thus we have
\begin{align}
L^{{\rm stat},\varrho}_{{\cal L}_{\lambda}^\infty\to P(x_1)}&= \omega^{\varrho}(z_1)+L_{z_1\to \mathbf{c}_{\lambda}} +L_{\mathbf{c}_{\lambda}\to P(x_1)},\label{eq:PathRelationParseSubAdditivity1}\\
L_{{\cal L}_{\lambda}\to P(x_2)} &= L_{z_2\to \mathbf{c}_{\lambda}}+ L_{\mathbf{c}_{\lambda}\to P(x_2)},\label{eq:PathRelationParseSubAdditivity4}
\end{align}
and, by subadditivity of the last passage time, we have
\begin{align}
L^{{\rm stat},\varrho}_{{\cal L}_{\lambda}^\infty\to P(x_2)} &\geq \omega^{\varrho}(z_1)+L_{z_1\to \mathbf{c}_{\lambda}}+ L_{\mathbf{c}_{\lambda}\to P(x_2)},\label{eq:PathRelationParseSubAdditivity2}\\
L_{{\cal L}_{\lambda}\to P(x_1)}&\geq L_{z_2\to \mathbf{c}_{\lambda}} +L_{\mathbf{c}_{\lambda}\to P(x_1)}.\label{eq:PathRelationParseSubAdditivity3}
\end{align}
Subtracting \eqref{eq:PathRelationParseSubAdditivity3} from \eqref{eq:PathRelationParseSubAdditivity4}, we get
\begin{align}\label{eq:OneSideoFinequality}
L_{{\cal L}_{\lambda}\to P(x_2)}-L_{{\cal L}_{\lambda}\to P(x_1)}\leq L_{\mathbf{c}_{\lambda}\to P(x_1)}-L_{\mathbf{c}_{\lambda}\to P(x_2)},
\end{align}
while subtracting \eqref{eq:PathRelationParseSubAdditivity1} from \eqref{eq:PathRelationParseSubAdditivity2}, one obtains
\begin{align}\label{eq:OtherSideOfTheInequality}
L^{{\rm stat},\varrho}_{{\cal L}_{\lambda}^\infty\to P(x_2)}-L^{{\rm stat},\varrho}_{{\cal L}_{\lambda}^\infty\to P(x_1)}\geq L_{\mathbf{c}_{\lambda}\to P(x_1)}-L_{\mathbf{c}_{\lambda}\to P(x_2)}.
\end{align}
\eqref{eq:UpsideCoupling} follows by combining \eqref{eq:OneSideoFinequality} and \eqref{eq:OtherSideOfTheInequality}.
\end{proof}

\subsubsection{Bounds on exit points probability}
In what follows, we state a result for the exit point $Z^{{\rm stat},\varrho}$ of the stationary last passage percolation. A weaker bound with the restriction on $\varrho\in [1/4,3/4]$ can be found in Lemma~2.2 of~\cite{Pi17}.
\begin{lem}\label{AlternativeExitPointLemma}
Fix any $\varrho\in (0,1)$ and $c\in\R_{>0}$. Consider the stationary last passage percolation model $L^{{\rm stat},\varrho}_{\mathcal{L}^{\infty}_{\lambda}}$. Denote the exit point of the last passage path from $\mathcal{L}^{\infty}_{\lambda}$ to $P^{\varrho}(x):=((1-\varrho)^2 N+x, \varrho^2 N-x)$ by $\widetilde Z^{{\rm stat},\varrho}(x)$. Then, there exists $M_0=M_0(c,\lambda,\varrho)$ such that for all $M\geq M_0$, we have
\begin{align}\label{eq:BoundOnTheExitPoint}
\Pb\left(\left|\widetilde Z^{{\rm stat},\varrho}(c N^{1/3})\right|\geq M N^{2/3}\right)\leq C_1 e^{- C_2M^2},
\end{align}
uniformly for all large $N$ where $C_1:= C_1( \lambda, \varrho)$ and $C_2:=C_2(\lambda,\varrho)$ are two constants.
\end{lem}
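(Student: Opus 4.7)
The plan is to extend the Balász--Cator--Seppäläinen style exit-point estimate used in Lemma~2.2 of~\cite{Pi17} (which is restricted there to $\varrho\in[1/4,3/4]$) to all $\varrho\in(0,1)$, at the cost of allowing $C_1,C_2$ to depend on $\lambda,\varrho$. First I would reduce to $x=0$: the stationary model enjoys a translation-invariance property along the anti-diagonal containing $P^\varrho(\cdot)$, in the same spirit as~\eqref{eqTranslInv}, so $\widetilde Z^{{\rm stat},\varrho}(cN^{1/3})-cN^{1/3}$ is equidistributed with $\widetilde Z^{{\rm stat},\varrho}(0)$ up to an $O(1)$ integer-part correction. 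Since $cN^{1/3}=o(MN^{2/3})$, it suffices to bound $\Pb(|\widetilde Z^{{\rm stat},\varrho}(0)|\geq MN^{2/3})$, and by symmetry I would focus on the upper tail.

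The main step is a perturbation-of-density coupling. I would introduce a second stationary model with parameter $\varrho':=\varrho-\delta$, $\delta\asymp MN^{-1/3}$, sharing the bulk weights $\{\omega_{i,j}\}$ but with fresh boundary weights of rates $1-\varrho'$ and $\varrho'$ along $\mathcal{L}^\infty_\lambda$. The characteristic of the perturbed model macroscopically exits $\mathcal{L}^\infty_\lambda$ to the left of the $\varrho$-characteristic by an amount of order $\delta N^{2/3}=MN^{2/3}$. A monotonicity coupling in the spirit of Lemma~\ref{CouplingLemma}, applied between the two stationary models, then implies that on the event $\{\widetilde Z^{{\rm stat},\varrho}(0)\geq MN^{2/3}\}$ the difference $L^{{\rm stat},\varrho}_{\mathcal{L}^\infty_\lambda\to P^\varrho(0)}-L^{{\rm stat},\varrho'}_{\mathcal{L}^\infty_\lambda\to P^\varrho(0)}$ exceeds a specific deterministic quantity. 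The means of these two passage times differ by a curvature-of-shape term of order $\delta^2N=M^2N^{1/3}$, so the estimate is reduced to a one-point fluctuation question handled by Propositions~\ref{ppUpTail} and~\ref{ppLowTail}, together with their stationary analogues from~\cite{FO17v1} and a Cram\'er bound on the purely Gaussian boundary-increment contribution coming from~\eqref{eq:RandomWeightsAlongAntiDiagonal}. Tuning the constants as in the BCS sandwich produces the claimed Gaussian exponent $e^{-C_2M^2}$ (this is weaker than the expected cubic tail but is all that the BCS sandwich yields, and is what the downstream applications need).

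The main technical obstacle will be uniformity in $\varrho$ as it approaches the boundary of $(0,1)$. There the curvature of the limit shape degenerates like $\varrho(1-\varrho)$, and the boundary exponentials in~\eqref{eq:RandomWeightsAlongAntiDiagonal} have variances blowing up like $\varrho^{-2}$ and $(1-\varrho)^{-2}$, so the prefactor in $\delta$ has to be chosen with $\varrho$-dependent constants both to keep $\varrho'\in(0,1)$ for $M\leq M^*$ and to keep the one-point tail estimates effective. This is precisely why $C_1(\lambda,\varrho), C_2(\lambda,\varrho)$ and the threshold $M_0(c,\lambda,\varrho)$ must be permitted to depend on $\varrho$. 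The lower tail $\widetilde Z^{{\rm stat},\varrho}(0)\leq -MN^{2/3}$ is treated symmetrically by taking $\varrho'=\varrho+\delta$ and repeating the argument.
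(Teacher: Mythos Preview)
Your route is genuinely different from the paper's and, as written, has a real gap at the coupling step.

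The paper does \emph{not} introduce a second stationary model with perturbed density. Instead it argues directly: on $\{\widetilde Z^{{\rm stat},\varrho}>MN^{2/3}\}$ the restricted LPP $L_{MN^{2/3}}(cN^{1/3}):=\max_{k\ge MN^{2/3}}\{\omega^\varrho(k)+L_{((\lambda-1)k/\lambda,k)\to P^\varrho(cN^{1/3})}\}$ must dominate the point-to-point time $L_{(0,0)\to P^\varrho(cN^{1/3})}$. One then splits at a threshold $s$: the lower tail of the point-to-point time is handled by Proposition~\ref{ppLowTail}, while the restricted quantity is rewritten via Lemmas~4.1--4.2 of~\cite{BCS06} as $\omega^\varrho(MN^{2/3})+R_{MN^{2/3}}$, with $R$ a half-space LPP with one-sided $\mathrm{Exp}(\varrho)$ boundary. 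A Cram\'er bound controls $\omega^\varrho(MN^{2/3})$, and the tail of $R$ is handled by the one-sided-boundary estimates of~\cite{BBP06} as stated in~\cite{FO17v1}. The curvature deficit $\alpha_3 M^2 N^{1/3}$ in the law-of-large-numbers comparison is what produces the Gaussian exponent.

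Your density-perturbation plan, by contrast, does not go through as described. Lemma~\ref{CouplingLemma} compares \emph{increments} of two LPP's along the anti-diagonal under an ordering of their exit points; it does not yield a lower bound on the single difference $L^{{\rm stat},\varrho}-L^{{\rm stat},\varrho'}$ at one endpoint. With ``fresh'' (independent) boundary weights there is no almost-sure ordering of exit points at all, so the hypothesis of that lemma is unavailable. If instead you couple the boundaries monotonically, the argmax shifts in the direction that makes $Z^{{\rm stat},\varrho-\delta}\ge Z^{{\rm stat},\varrho}$, which is the wrong inequality for bounding the upper tail of $Z^{{\rm stat},\varrho}$; the analogous inequality for $\varrho+\delta$ goes the other way but then gives no useful lower bound on $L^{{\rm stat},\varrho}-L^{{\rm stat},\varrho'}$ either. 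The BCS density-perturbation identity you allude to delivers \emph{moment} bounds on the exit point (via $\partial_\varrho \E L^{{\rm stat},\varrho}$), not Gaussian tails; to upgrade to a tail bound you would in the end need a one-point fluctuation estimate for $L^{{\rm stat},\varrho'}$ at an off-characteristic endpoint, whose standard proof already uses an exit-point bound---so the argument is circular unless you import an independent tail input such as the half-space estimate the paper uses. In short, the missing piece is precisely the direct comparison with a point-to-point (or half-space) LPP that the paper performs; once you add that, the perturbed model $\varrho'$ is no longer needed.
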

\begin{proof}
We only show the bound on the upper tail. Proof of the lower tail follows by the same arguments.
For the proof we need to divide the maximizer on several pieces. For that purpose, in this proof we denote by
\begin{equation}
L_{a}(x):=\max_{k\geq a} \{L_{(\lambda^{-1}(\lambda-1) k,k)\to P^\varrho(x)}+\omega^\varrho(k)\}.
\end{equation}

First notice that
\begin{equation}\label{eq5.55}
\begin{aligned}
\Pb\left(\widetilde Z^{{\rm stat},\varrho}>MN^{2/3}\right)&\leq \Pb\left(L_{M N^{2/3}}(c N^{1/3}) \geq L_{(0,0)\to P^\varrho(c N^{1/3})}\right),\\
&\leq \Pb\left(L_{M N^{2/3}}(c N^{1/3})\geq s\right)+\Pb\left(L_{(0,0)\to P^\varrho(c N^{1/3})}<s\right),
\end{aligned}
\end{equation}
for any choice of $s$. The law of large number estimates are
\begin{equation}\label{eq4.56}
\begin{aligned}
&L_{(0,0)\to P^\varrho(c N^{1/3})} \simeq N-\alpha_1 cN^{1/3}, \quad \omega^\varrho(MN^{2/3})\simeq -\alpha_2 M N^{2/3}\\
&L_{(\lambda^{-1}(\lambda-1) M N^{2/3},M N^{2/3})\to P^\varrho(cN^{1/3})} \simeq N + \alpha_2 M N^{2/3}-(\alpha_1 c +\alpha_3 M^2)N^{1/3},
\end{aligned}
\end{equation}
with $\alpha_1=\frac{1-2\varrho}{\varrho(1-\varrho)}$, $\alpha_2=\frac{\varrho-\lambda}{\lambda \varrho (1-\varrho)}$, and $\alpha_3=\frac{(\lambda-2\lambda\varrho+\varrho^2)^2}{4\lambda^2(1-\varrho)^3\varrho^3}$. Due to (\ref{eq4.56}), we choose $s=N-\alpha_1 c N^{1/3}-\frac14\alpha_3 M^2 N^{1/3}$. In this way, we will be able to make both probabilities in (\ref{eq5.55}) very small as $M\to\infty$, uniformly in $N$.

Proposition~\ref{ppLowTail} gives immediately $\Pb\left(L_{(0,0)\to P^\varrho(c N^{1/3})}<s\right)\leq C_1 e^{-C_2 M^2}$. Next, consider the first probability in the r.h.s.~of (\ref{eq5.55}). For $a>0$, use the short-hand notation $\ell(k)=\lambda^{-1}(\lambda-1) k$, we first rewrite $L_a(x)$ as follows,
\begin{equation}\label{eq5.56}
\begin{aligned}
L_{a}(x)&=\max_{k\geq a} \left\{\max_{l\geq k}L_{(\ell(k),k)\to (\ell(a),l)}+\omega^\varrho(k)+L_{(\ell(a),l)\to P^\varrho(x)}\right\} \\
&=\omega^\varrho(a)+\max_{l\geq a} \left\{L_{(\ell(a),l)\to P^\varrho(x)} + \max_{a\leq k \leq l}\{L_{(\ell(k),k)\to (\ell(a),l)}+\omega^\varrho(k)-\omega^\varrho(a)\}\right\}\\
&=\omega^\varrho(a)+ R_a(x)
\end{aligned}
\end{equation}
where
\begin{equation}\label{eq5.57}
\begin{aligned}
R_a(x)=\max_{l\geq a} \bigg\{L_{(\ell(a),l)\to P^\varrho(x)} + \sum_{k=a+1}^l \theta_k\bigg\},
\end{aligned}
\end{equation}
with $\theta_k$'s being i.i.d.\ ${\rm Exp}(\varrho)$ random variables. The equality (\ref{eq5.57}) follows from Lemma~4.1 and Lemma~4.2 of~\cite{BCS06}.

We also have, for any $\tilde s$, the inequality
\begin{equation}\label{eq4.58}
\begin{aligned}
\Pb\left(L_{M N^{2/3}}(c N^{1/3})\geq s\right) &=\Pb\left(\omega^\varrho(M N^{2/3})+R_{M N^{2/3}}(c N^{1/3})\geq s\right) \\
&\leq \Pb\left(\omega^\varrho(M N^{2/3})\geq \tilde s\right) +\Pb\left(R_{M N^{2/3}}(c N^{1/3})\geq s-\tilde s\right).
\end{aligned}
\end{equation}
Recall that $s=N-\alpha_1 c N^{1/3}-\frac14\alpha_3 M^2 N^{1/3}$. Thus choosing $\tilde s=-\alpha_2 M N^{2/3}+\frac14\alpha_3 M^2 N^{1/3}$ we have $\bar s=s-\tilde s = N +\alpha_2 M N^{2/3}-(\alpha_1 c+\frac12\alpha_3 M^2)N^{1/3}$.

Let us bound the first term in (\ref{eq4.58}). Denote by $\tilde q_i=q_i-1/\varrho$ and $\tilde p_i = -p_i+1/(1-\varrho)$. Then, using the exponential Chebyshev's inequality, we have
\begin{equation}\label{eq4.59}
\begin{aligned}
 \Pb\left(\omega^\varrho(M N^{2/3})\geq \tilde s\right) &= \Pb\Bigg(\sum_{i=1}^{M N^{2/3}} \tilde q_i + \sum_{j=1}^{\lambda^{-1}(1-\lambda)M N^{2/3}} \tilde p_j \geq\hat s\Bigg)\\
 &\leq e^{-t\hat s} \E(e^{t\tilde q_1})^{M N^{2/3}} \E(e^{t\tilde p_1})^{\lambda^{-1}(1-\lambda)M N^{2/3}}.
\end{aligned}
\end{equation}
with $\hat s = \frac14 \alpha_3 M^2 N^{1/3}$ for any $t\geq 0$. Taking $t=\frac{M (\lambda-2\lambda\varrho+\varrho^2)}{16\lambda\varrho(1-\varrho)} N^{-1/3}$, after simple calculations we get, for all $N$ large enough,
\begin{equation}
(\ref{eq4.59})\leq e^{-\alpha_4 M^3},
\end{equation}
with $\alpha_4=\frac{(\lambda-2\lambda\varrho+\varrho^2)^3}{4 \lambda^3(4\varrho(1-\varrho))^4}$.

It remains to bound the second term in (\ref{eq4.58}). After shifting the half-line to start at the origin, $R_{MN^{2/3}}(c N^{1/3})$ becomes the LPP from $(0,0)$ to
\mbox{$(\gamma^2 n,n)=(1-\varrho)^2 N+c N^{1/3}+\lambda^{-1}(1-\lambda)MN^{2/3},\varrho^2 N-cN^{1/3}-M N^{2/3})$} with one-sided vertical boundary sources with parameter $\varrho$. This LPP was studied in~\cite{BBP06} in terms of sample covariance matrices (see their Section~6 for the connection to LPP). In their proof they obtained exponential bounds used then to show the convergence to the limiting law. These bounds implies exponential bounds for the right tail of the distribution. An explicit statement in the LPP language of this bound can be found in Lemma~3.3 of~\cite{FO17v1}. In our case it corresponds to have a \mbox{$\kappa=\frac{(\lambda-2\lambda\varrho+\varrho^2)M}{2\lambda\varrho(1-\varrho)}$}. Using (3.8) and the first bound of Lemma~3.3 of~\cite{FO17v1} with the choice $a=\frac{1}{4(1-\varrho)^{2/3}}+O(1/M^2)$ (to match with $\bar s$), we get uniformly for large $N$,
\begin{equation}
\Pb\left(R_{M N^{2/3}}(c N^{1/3})\geq s-\tilde s\right) \leq C e^{-cM^2},
\end{equation}
for all $M\geq M_0$, $M_0$ a constant and $C,c$ some $M$-independent constants. This ends the proof of (\ref{eq:BoundOnTheExitPoint}).
\end{proof}

Below, we note down a simple corollary of Lemma~\ref{AlternativeExitPointLemma}.
\begin{cor}\label{ExitPCor}
Recall the definition of $P(\cdot)$ given in \eqref{eq:defPU}. Consider the last passage percolation model $L^{{\rm stat}, \varrho}_{{\cal L}^{\infty}_{\lambda}}$ for $\varrho=\lambda_{\pm}$. Denote the exit points the corresponding last passage paths from the line ${\cal L}^{\infty}_{\lambda}$ to the point $P(x)$ by $Z^{{\rm stat}, \lambda_\pm}(x)$. Fix some \mbox{$C \in \R$}. Let $\xi^{\pm}_{\lambda}$ be such that the line joining the points \mbox{$Q_N(\xi^{\pm}_{\lambda}):=(\lambda^{-1}(\lambda-1)\xi^{\pm}_{\lambda} N, \xi^{\pm}_{\lambda} N)$} and $P(CN^{1/3})$ has direction $((1-\lambda_{\pm})^2, (\lambda_{\pm})^2)$. Then, there exists $M_0=M_0(\lambda)$, $K_1=K_1(\lambda)$ and $K_2=K_2(\lambda)$ such that for all $M\geq M_0$, the followings hold uniformly for all large $N$
 \begin{align}
 &\Pb\left(Z^{{\rm stat},\lambda_{+}}(CN^{1/3}) - \xi^{+}_{\lambda} N< -M N^{2/3}\right)\leq K_1\exp(-K_2 M^2),\label{eq:ExitLowerTail}\\
 &\Pb\left(Z^{{\rm stat},\lambda_{-}}(CN^{1/3}) - \xi^{-}_{\lambda} N> M N^{2/3}\right)\leq K_1\exp(-K_2 M^2).\label{eq:ExitUpperTail}
 \end{align}
\end{cor}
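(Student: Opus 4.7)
The plan is to reduce the statement to Lemma~\ref{AlternativeExitPointLemma} by an affine change of coordinates that places $Q_N(\xi^\pm_\lambda)$ at the origin and realigns $P(CN^{1/3})$ with the $\lambda_\pm$-characteristic from the new origin.

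First I would make the defining condition of $\xi^\pm_\lambda$ explicit. Writing
\[
P(CN^{1/3})-Q_N(\xi^\pm_\lambda)=M^\pm\bigl((1-\lambda_\pm)^2,\lambda_\pm^2\bigr)
\]
gives a pair of linear equations for the unknowns $\xi^\pm_\lambda$ and $M^\pm>0$. Using $\lambda_\pm=\lambda\pm rN^{-1/3}\to\lambda$, a direct computation shows that both $\xi^\pm_\lambda/N$ and $M^\pm/N$ converge to strictly positive limits, so in particular $M^\pm=\Theta(N)$. In the notation of Lemma~\ref{AlternativeExitPointLemma}, the translated endpoint is then exactly $P^{\lambda_\pm}(0)$ with $N$ replaced by $M^\pm$.

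Next I would invoke translation invariance. Because the bulk weights $\omega_{i,j}$ are i.i.d., the boundary weights \eqref{eq:RandomWeightsAlongAntiDiagonal} arise from i.i.d.\ sequences $\{p_i\},\{q_i\}$, and the line $\mathcal{L}^\infty_\lambda$ is invariant under translation by any of its lattice points (up to $O(1)$ integer rounding), shifting the origin to $Q_N(\xi^\pm_\lambda)$ produces a stationary LPP problem distributionally identical to the one in Lemma~\ref{AlternativeExitPointLemma} with $\varrho=\lambda_\pm$, $x=0$ and $N$ replaced by $M^\pm$. The exit point in shifted coordinates $\widetilde Z^{\mathrm{stat},\lambda_\pm}(0)$ is precisely $Z^{\mathrm{stat},\lambda_\pm}(CN^{1/3})-\xi^\pm_\lambda N$. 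Applying the lemma (whose proof, inspecting it, goes through verbatim for $c=0$, the hypothesis $c>0$ never being actually used) yields
\[
\Pb\!\left(\bigl|Z^{\mathrm{stat},\lambda_\pm}(CN^{1/3})-\xi^\pm_\lambda N\bigr|\geq M(M^\pm)^{2/3}\right)\leq C_1\exp(-C_2 M^2),
\]
and keeping only the one-sided tail relevant to each of \eqref{eq:ExitLowerTail}--\eqref{eq:ExitUpperTail}, together with the rescaling $M\mapsto M\,(N/M^\pm)^{2/3}=\Theta(M)$, produces the claimed bounds.

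The only real obstacle is uniformity of constants in the parameter $r$: in Lemma~\ref{AlternativeExitPointLemma} the constants $C_1,C_2$ depend on $\varrho$, whereas here $\varrho=\lambda_\pm(r)$ varies with $N$. Tracing through that lemma's proof, however, the quantities $\alpha_1,\dots,\alpha_4$ and the BBP-type constant borrowed from Lemma~3.3 of~\cite{FO17v1} are continuous in $\varrho$ on $(0,1)$. Consequently, for every fixed $r$ and every $N$ large enough, $\lambda_\pm(r)$ lies in a fixed compact subinterval of $(0,1)$ containing $\lambda$, on which these constants admit uniform bounds depending only on $\lambda$; this yields the $K_1(\lambda),K_2(\lambda)$ stated in the corollary.
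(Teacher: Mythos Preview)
Your proposal is correct and follows essentially the same route as the paper: translate the origin to $Q_N(\xi^\pm_\lambda)$, use the invariance of $\mathcal{L}^\infty_\lambda$ under this shift to identify the translated exit point with $Z^{\mathrm{stat},\lambda_\pm}(CN^{1/3})-\xi^\pm_\lambda N$, and then apply Lemma~\ref{AlternativeExitPointLemma} with $\varrho=\lambda_\pm$ and the new endpoint on the $\lambda_\pm$-characteristic. Your explicit discussion of the $c=0$ case and of the uniformity of the constants in $\varrho=\lambda_\pm(r)$ (via continuity on a compact neighborhood of $\lambda$) actually makes precise a point the paper's proof leaves implicit.
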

\begin{proof}
To see this, we first translate $X$-axis and $Y$-axis linearly so that the origin is moved to the points $Q_N(\xi^{\pm}_{\lambda})$. Notice that the definition of ${\cal L}_{\lambda}^\infty$ is invariant under these shifts of the origin. In these new co-ordinate systems, we denote the point $P(CN^{1/3})$ by $\widetilde{P}_{\pm}(CN^{1/3})$ and the corresponding exit points by $\widetilde{Z}^{{\rm stat},\lambda_{\pm}}$. Using the fact $Z^{{\rm stat}, \lambda_\pm}(CN^{1/3}) - \xi^{\pm}_{\lambda} N=\widetilde{Z}^{{\rm stat},\lambda_{\pm}}$, we obtain
 \begin{equation}\label{eq:ExitPointIdentity1}
 \begin{aligned}
 &\Pb\left(Z^{{\rm stat},\lambda_{+}}(CN^{1/3}) - \xi^{+}_{\lambda} N< -M N^{2/3}\right)=\Pb(\widetilde{Z}^{{\rm stat},\lambda_{+}}<-MN^{2/3}),\\
 &\Pb\left(Z^{{\rm stat},\lambda_{-}}(CN^{1/3}) - \xi^{-}_{\lambda} N> M N^{2/3}\right)=\Pb(\widetilde{Z}^{{\rm stat},\lambda_{-}}>MN^{2/3}).
 \end{aligned}
 \end{equation}
Further, we have $\widetilde{P}_{\pm}(CN^{1/3})=t((1-\lambda_{\pm})^2 N, (\lambda_{\pm})^2 N)$ for some constant $t>0$.

From Lemma~\ref{AlternativeExitPointLemma} applied for $\varrho=\lambda_\pm$, there exists $M_0=M_0(\lambda)$, $K_1=K_1(\lambda)$ and $K_2=K_2(\lambda)$ such that for all $M\geq M_0$, we have
\begin{equation}
\Pb\left(|\widetilde{Z}^{{\rm stat},\lambda_{\pm}}|>MN^{2/3}\right)\leq K_1 e^{-K_2M^2},\label{eq:ExitPointTail}
\end{equation}
for all large $N$. Since the probabilities in \eqref{eq:ExitPointIdentity1} are bounded by the ones in \eqref{eq:ExitPointTail}, the proof is completed.
\end{proof}

Concerning the exit point from ${\cal L}_\lambda$, a simple consequence of Lemma~4.3 of~\cite{FO17v1} results into the following estimate.
\begin{lem}\label{ExitPointOfGeneralPercolation}
Let $\xi_\lambda$ be given as in (\ref{eqB}). Fix any $C\in \R$. Then, there exists $M_0$ such that for all $M>M_0=M_0(\lambda)$, we have
 \begin{align}\label{eq:ExitPointProbabilityDecay}
 \Pb\left(\left|Z_\lambda(CN^{1/3})-\xi_\lambda N\right|\geq MN^{2/3}\right) \leq K_1e^{-K_2M^2}
\end{align}
for some constants $K_1=K_1(\lambda)$ and $K_2=K_2(\lambda)$, uniformly for all $N$ large enough.
\end{lem}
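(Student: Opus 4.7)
The strategy is to deduce the statement from Lemma~4.3 of~\cite{FO17v1}, which provides precisely such a Gaussian tail bound on the exit point of a two-sided line-to-point LPP when the endpoint lies on the characteristic. All that needs to be done is to match the geometric setup of the present LPP problem with the one considered there.

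First I would pass from the half-line $\mathcal{L}_\lambda$ to its two-sided extension $\mathcal{L}_\lambda^\infty=\{(\lfloor(\lambda-1)x/\lambda\rfloor,x):x\in\Z\}$, writing $Z_\lambda^\infty(CN^{1/3})$ for the argmax of $k\mapsto L_{(\lfloor(\lambda-1)k/\lambda\rfloor,k)\to P(CN^{1/3})}$ taken over all $k\in\Z$. Since $\lambda<\rho$, the constant $\xi_\lambda=2(\rho-\lambda)\lambda/(1-\lambda-\rho+2\lambda\rho)$ is strictly positive, so on the event $\{|Z_\lambda^\infty(CN^{1/3})-\xi_\lambda N|\leq MN^{2/3}\}$ the two-sided exit point is positive for all $N\geq N_0(M,\lambda)$ and therefore coincides with the one-sided $Z_\lambda(CN^{1/3})$. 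Consequently it suffices to prove the bound for $Z_\lambda^\infty$.

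Next I would translate the lattice so that the characteristic footprint $Q_N(\xi_\lambda)=((\lambda-1)\xi_\lambda N/\lambda,\xi_\lambda N)$ becomes the origin. The line $\mathcal{L}_\lambda^\infty$ is preserved up to an $O(1)$ reparametrisation from the floor functions, and a short computation gives
\begin{equation*}
P(CN^{1/3})-Q_N(\xi_\lambda)=\bigl((1-\lambda)^2\tilde N,\lambda^2\tilde N\bigr)+CN^{1/3}(1,-1),\qquad \tilde N=\frac{2N}{1-\lambda-\rho+2\lambda\rho},
\end{equation*}
so that in the new coordinates the endpoint sits on the characteristic $((1-\lambda)^2,\lambda^2)$ emanating from the origin, perturbed by an anti-diagonal shift of order $\tilde N^{1/3}$, and the exit point to bound is $Z_\lambda^\infty(CN^{1/3})-\xi_\lambda N$.

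This is the precise regime treated by Lemma~4.3 of~\cite{FO17v1}, which then yields $\Pb(|Z_\lambda^\infty(CN^{1/3})-\xi_\lambda N|\geq MN^{2/3})\leq K_1 e^{-K_2 M^2}$ for $\lambda$-dependent constants, all $M\geq M_0(\lambda)$ and $N$ large enough; together with the first paragraph this gives (\ref{eq:ExitPointProbabilityDecay}). The only point needing care is checking that Lemma~4.3 of~\cite{FO17v1} is uniform in anti-diagonal perturbations on the $N^{1/3}$ scale, which it is by construction, as it is designed to cover endpoints in the KPZ scaling window around the characteristic; this is why the present statement is indeed a direct corollary.
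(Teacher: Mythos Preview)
Your proof is correct and follows essentially the same route as the paper: extend ${\cal L}_\lambda$ to its two-sided version, reduce the half-line exit point bound to the full-line one, translate so that the setup matches that of Lemma~4.3 of~\cite{FO17v1}, and invoke that lemma. The paper shifts along the $(1,1)$ direction to put the endpoint on the diagonal (as in the proof of Lemma~\ref{Lemma1}), whereas you shift the characteristic footprint $Q_N(\xi_\lambda)$ to the origin so the endpoint lies along $((1-\lambda)^2,\lambda^2)$; both reductions are equivalent and feed into the same cited lemma.
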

\begin{proof}
This results is a simple consequence of Lemma~4.3 of~\cite{FO17v1}. One shifts the origin to get the end-point $((1+\gamma)N+cN^{1/3},(1-\gamma)N-c N^{1/3})$ along the diagonal of the first quadrant as in the proof of Lemma~\ref{Lemma1}. Then one can extend the line ${\cal L}_\lambda$ to ${\cal L}_\lambda^\infty =\left\{\left(\left\lfloor\tfrac{\lambda-1}{\lambda} x\right\rfloor, x\right)\big| x\in \Z\right\}$. Call $Z^{\infty}_{\lambda}$ the exit point position from ${\cal L}_\lambda^\infty$. Then
\begin{equation}
\Pb\left(\left|Z_\lambda(CN^{1/3})-\xi_\lambda N\right|\geq MN^{2/3}\right) \leq \Pb\left(\left|Z^{\infty}_{\lambda}(CN^{1/3})-\xi_\lambda N\right|\geq MN^{2/3}\right).
\end{equation}
In Lemma~4.3 of~\cite{FO17v1} we have exactly a Gaussian estimate (in the variable $M$) of the latter, which finishes the proof.
\end{proof}

\subsubsection{Bound on the probability of good event $G_N(r)$}
Now we generalize Lemma~2.3 of~\cite{Pi17} to give some upper bound on the probability $\Pb((G_{N}(r))^c)$ (see \eqref{eq:GoodEvent} for its definition).
\begin{prop}\label{CrucialEventIdentification}
Fix some $C\in \mathbb{R}$. Denote the exit point of the stationary LPP $L^{\lambda_{\pm}}_{{\cal L}_{\lambda}^{\infty}}$ (with weights $\omega^{\lambda_{\pm}}$) from the line $\mathcal{L}^{\infty}_{\lambda}$ to $P(x)$ by $Z^{{\rm stat},\lambda_{\pm}}(x)$. Adopt all other notations introduced before. Then, there exists $r_0=r_0(C,\lambda)$ such that for all $r\geq r_0$, we have
\begin{align}\label{eq:EventGoesToZero}
\Pb\left( \left(G_{N}(r)\right)^c \right) \leq K_1e^{-K_2r^2}
\end{align}
uniformly for all large $N$ where $K_1=K_1(\lambda)$ and $K_2=K_2(\lambda)$ are two large constants.
\end{prop}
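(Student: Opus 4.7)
The plan is to decompose $G_N(r)^c$ via a union bound into the two events
\begin{equation*}
E_1 := \{Z^{{\rm stat},\lambda_-}(CN^{1/3}) > Z_\lambda(0)\}, \qquad E_2 := \{Z_\lambda(CN^{1/3}) > Z^{{\rm stat},\lambda_+}(0)\},
\end{equation*}
and to reduce each to the Gaussian-type exit-point tail bounds from Corollary~\ref{ExitPCor} and Lemma~\ref{ExitPointOfGeneralPercolation}. Since $E_1$ and $E_2$ are structurally symmetric (they exchange $\lambda_+ \leftrightarrow \lambda_-$ and the roles of the stationary and non-stationary models), I focus on $E_1$.

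The core step is to quantify the gap between the two typical exit-point positions being compared. By Corollary~\ref{ExitPCor} and Lemma~\ref{ExitPointOfGeneralPercolation}, $Z^{{\rm stat},\lambda_-}(CN^{1/3})$ and $Z_\lambda(0)$ concentrate on the $N^{2/3}$ scale around $\xi_\lambda^- N$ and $\xi_\lambda N$ respectively. Writing the characteristic intersection explicitly as
\begin{equation*}
\xi(\lambda_*) N = y_P - \frac{\lambda_*^2(x_P + \alpha y_P)}{(1-\lambda_*)^2 + \alpha \lambda_*^2}, \qquad \alpha := \frac{1-\lambda}{\lambda},
\end{equation*}
with $(x_P, y_P)$ the coordinates of $P(CN^{1/3})$, I would note that the denominator $g(\lambda_*) := (1-\lambda_*)^2 + \alpha \lambda_*^2$ satisfies $g'(\lambda) = 0$, making the Taylor expansion around $\lambda_* = \lambda$ especially clean: one gets $\xi(\lambda+\epsilon) - \xi_\lambda = -\kappa(\lambda,\rho)\,\epsilon + O(\epsilon^2)$ for an explicit nonzero constant $\kappa(\lambda,\rho)$. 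Substituting $\epsilon = \pm r/N^{1/3}$ and absorbing the lower-order $O(CN^{1/3})$ shift coming from replacing $P(CN^{1/3})$ by $P(0)$ in the typical-position formula for $Z_\lambda(0)$, this yields $|\xi_\lambda^\pm - \xi_\lambda| N = \kappa(\lambda,\rho)\, r\, N^{2/3}(1 + o_r(1))$, with signs aligned so that $G_N(r)$ is the typical event.

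Given this gap, I would fix an intermediate threshold $a$ at distance at least $\tfrac{\kappa r}{3} N^{2/3}$ from both $\xi_\lambda^- N$ and $\xi_\lambda N$, and split
\begin{equation*}
\Pb(E_1) \le \Pb\!\left(|Z^{{\rm stat},\lambda_-}(CN^{1/3}) - \xi_\lambda^- N| \ge \tfrac{\kappa r}{3} N^{2/3}\right) + \Pb\!\left(|Z_\lambda(0) - \xi_\lambda N| \ge \tfrac{\kappa r}{3} N^{2/3}\right).
\end{equation*}
Corollary~\ref{ExitPCor} (with $M = \kappa r/3$) bounds the first term by $K_1 e^{-K_2 r^2}$, and Lemma~\ref{ExitPointOfGeneralPercolation} bounds the second by the same expression. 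Choosing $r_0 = r_0(C,\lambda)$ large enough that both the $O(CN^{1/3})$ endpoint correction and the $O(r^2 N^{1/3})$ quadratic Taylor remainder are dominated by $\tfrac{\kappa r}{3} N^{2/3}$ validates the splitting, and $E_2$ is handled by the same argument with the obvious substitutions. I expect the main obstacle to be bookkeeping the sign of $d\xi/d\lambda_*$ carefully so that the typical gap sits on the favorable side of the required inequalities; once that is verified, the Gaussian-in-$r$ tails supply the claimed $K_1 e^{-K_2 r^2}$ decay uniformly in $N$.
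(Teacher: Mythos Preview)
Your proposal is correct and follows essentially the same route as the paper: the same union bound into $E_1$ and $E_2$, the same computation that the centers $\xi_\lambda^\pm N$ and $\xi_\lambda N$ are separated by a gap of order $r N^{2/3}$, the same choice of an intermediate threshold at a fixed fraction of that gap (the paper takes $c_1=a_1 r/2$ where you take $\kappa r/3$), and the same appeal to Corollary~\ref{ExitPCor} and Lemma~\ref{ExitPointOfGeneralPercolation}. The only cosmetic differences are that the paper centers the stationary exit point at $\xi_\lambda N$ and then shifts, and uses one-sided deviations (which is all Corollary~\ref{ExitPCor} provides) rather than your two-sided $|\cdot|$ bounds.
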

\begin{proof} In the following discussion, we will prove that there exists $r_0$ (depending on $C$ and $\lambda$) such that for all $r\geq r_0$ the following holds (for all large $N$)
\begin{align}
 &\Pb(Z^{{\rm stat},\lambda_{-}}(CN^{1/3}) >Z_{\lambda}(0))\leq 2 K^{\prime}_1e^{-K^{\prime}_2 r^2},\label{eq:EventOneOnlyHere}\\
 &\Pb\left( Z_\lambda(CN^{1/3})> Z^{{\rm stat},\lambda_+}(0)\right)\leq 2 \tilde{K}^{\prime}_1e^{-\tilde{K}^{\prime}_2 r^2}\label{eq:EventTwoHere}.
\end{align}
for two sets of constants $\{K^{\prime}_1,K^{\prime}_2\}$ and $\{\tilde{K}^{\prime}_1,\tilde{K}^{\prime}_2\}$ which depend on $\lambda$. Then \eqref{eq:EventOneOnlyHere}-\eqref{eq:EventTwoHere} implies \eqref{eq:EventGoesToZero}.

\textbf{Proof of \eqref{eq:EventOneOnlyHere}}.
We begin by noting that for any $c_1$ following holds
\begin{equation}\label{eq:DivingTheProbability}
\begin{aligned}
\Pb\left(Z^{{\rm stat},\lambda_-}(CN^{1/3})> Z_\lambda(0)\right) &\leq \Pb\left(Z^{{\rm stat},\lambda_-}(CN^{1/3})-\xi_\lambda N>-c_1N^{2/3}\right)\\
&+ \Pb\left(Z_\lambda(0)-\xi_\lambda N<-c_1N^{2/3}\right).
\end{aligned}
\end{equation}
We have to choose $c_1$ such that both terms on the rhs.\ of (\ref{eq:DivingTheProbability}) are bounded by $K^{\prime}_1 e^{-K^{\prime}_2 r^2}$ for all $r\geq r_1=r_0(C,\lambda)$ uniformly for all large $N$.

The translation invariance property (\ref{eqTranslInv}) gives
\begin{equation}
Z^{{\rm stat},\lambda_{-}}(CN^{1/3}) = Z^{{\rm stat},\lambda_{-}}(0)+CN^{1/3}.
\end{equation}
The characteristic line passing by $((1-\lambda)^2N,\lambda^2 N)$ having direction \mbox{$((1-\lambda_{-})^2, (\lambda_{-})^2)$} intersects the line ${\cal L}^{\infty}_{\lambda}$ at $((1-\lambda)\lambda^{-1} a(r,N),a(r,N))$ with $a(r,N)=a_1rN^{1/3}+a_2 r^2N^{1/3}$ for some constants $a_1(\lambda)>0$, $a_2(\lambda)<0$. Therefore, $Z^{{\rm stat},\lambda_{-}}(0)$ will be close to the point $\xi^{-}_{\lambda}N= \xi_{\lambda}N-(a_1rN^{2/3}+a_2r^2N^{1/3})$. Consequently we can write
\begin{equation}\label{eq:ZVarrhoConc}
\begin{aligned}
&\Pb\left(Z^{{\rm stat},\lambda_{-}}(CN^{1/3})-\xi_{\lambda}N >- c_1N^{2/3}\right)\\=& \Pb\left(Z^{{\rm stat},\lambda_{-}}(0) -\xi^{-}_{\lambda}N>(a_1r-c_1)N^{2/3}-(C+a_2r^2)N^{1/3}\right).
\end{aligned}
\end{equation}
Choose $c_1=a_1r/2$. Then for $N$ large enough,
\begin{equation}\label{eq5.48}
\eqref{eq:ZVarrhoConc}\leq \Pb\left(Z^{{\rm stat},\lambda_{-}}(0)- \xi^{-}_{\lambda}N >a_1rN^{2/3}/4\right)\leq K_1'e^{-K_2'r^{2}}
\end{equation}
by \eqref{eq:ExitUpperTail} of Corollary~\ref{ExitPCor}. On the other side, by Lemma~\ref{ExitPointOfGeneralPercolation} there exists $r_0=r_0(\lambda)$ such that for all $r\geq r_0$, it holds
\begin{equation}\label{eq5.49}
\Pb\left(Z_{\lambda}(0)- \xi_\lambda N<-a_1r N^{2/3}/2\right)\leq K^{\prime}_1e^{-K^{\prime}_2 r^2}.
\end{equation}
Combining (\ref{eq5.48}) and (\ref{eq5.49}) we conclude \eqref{eq:EventOneOnlyHere}.

\textbf{Proof of \eqref{eq:EventTwoHere}}. For any $c_3$, it holds
\begin{align}\label{eq:Dividingprobability2}
\Pb \left(Z_\lambda(CN^{1/3})> Z^{{\rm stat},\lambda_+}(0)\right) &\leq \Pb\left(Z_\lambda(CN^{1/3})-\xi_\lambda N>c_3N^{2/3}\right)\nonumber\\&+ \Pb\left(Z^{{\rm stat},\lambda_+}(0)-\xi_\lambda N<c_3N^{2/3}\right).
\end{align}
In the same way as above, we obtain that $Z^{{\rm stat},\lambda_{+}}(0)$ will concentrate around $\xi^{+}_{\lambda}N=\xi_{\lambda}N-(a^{\prime}_1 r N^{2/3}+a^{\prime}_2 r^2 N^{1/3})$ for some $a_1^\prime(\lambda)<0$ and $a_2^\prime(\lambda)<0$. Choose $c_3=|a^{\prime}_1|r/2$. Then by \eqref{eq:ExitLowerTail} of Corollary~\ref{ExitPCor}, we get
\begin{equation}
\Pb\left(Z^{{\rm stat},\lambda_+}(CN^{1/3})- \xi^{+}_{\lambda}N <c_3 N^{2/3}\right)\leq \tilde{K}_1e^{-\tilde{K}_1 r^2}
\end{equation}
uniformly for all large $N$, and by Lemma~\ref{ExitPointOfGeneralPercolation}, there is a $r_0$ such that
\begin{equation}
\Pb\left(Z_{\lambda}(CN^{1/3})-\xi_{\lambda} N>|a^{\prime}_1|r N^{2/3}\right)\leq \tilde{K}^{\prime}_1 e^{-\tilde{K}^{\prime}_1 r^2}
\end{equation}
for all $r>r_0$ uniformly for all large $N$. This completes the proof.
\end{proof}

For the next result, we need to define the rescaled process for the stationary case,
\begin{equation}\label{DefScaledLPPStat}
B^{\pm}_{N,\lambda}(u)= \frac{L^{{\rm stat},\lambda_\pm}_{{\cal L}^\infty_\lambda\to P(uN^{1/3})}-L^{{\rm stat},\lambda_\pm}_{{\cal L}^\infty_\lambda\to P(0)}+ \frac{1-2\lambda}{\lambda(1-\lambda)}uN^{1/3}}{N^{1/3}}.
\end{equation}
The first result focus on a comparison between $B^{\pm}_{N,\lambda}$ and $L^{{\rm resc},\lambda}_N$. The proof is geometric and uses the idea of Lemma~1 of~\cite{CP15b}.
\begin{prop}\label{SandwithLemma}
Recall the good event defined in (\ref{eq:GoodEvent}), namely
\begin{equation}
G_{N}(r):= \left\{Z^{{\rm stat},\lambda_-}(CN^{1/3})\leq Z_\lambda(0)\right\} \cap \{Z_\lambda(CN^{1/3})\leq Z^{{\rm stat},\lambda_+}(0)\}.
\end{equation}
Fix any two real numbers $0<u<v$ inside the interval $[0,C]$. Then, under the event $G_N(r)$, we have
\begin{align}\label{eq:NewEventProperty}
B^{-}_{N,\lambda}(u)-B^{-}_{N,\lambda}(v)\leq L^{{\rm resc},\lambda}_{N}(u)- L^{{\rm resc},\lambda}_{N}(v) \leq B^{+}_{N,\lambda}(u) - B^{+}_{N,\lambda}(v).
\end{align}
\end{prop}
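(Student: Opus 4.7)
The plan is to reduce the sandwich of Proposition~\ref{SandwithLemma} to two direct applications of Lemma~\ref{CouplingLemma}. Unpacking the definitions \eqref{eq:DefZeta1} and \eqref{DefScaledLPPStat}, the linear correction $\frac{(u-v)(1-2\lambda)}{\lambda(1-\lambda)}$ appears on each side of each inequality with the same sign and cancels, so it suffices to prove the sandwich of raw LPP increments
\begin{align*}
&L^{{\rm stat},\lambda_-}_{{\cal L}^\infty_\lambda\to P(vN^{1/3})}-L^{{\rm stat},\lambda_-}_{{\cal L}^\infty_\lambda\to P(uN^{1/3})} \leq L_{{\cal L}_\lambda\to P(vN^{1/3})}-L_{{\cal L}_\lambda\to P(uN^{1/3})}\\
&\qquad \leq L^{{\rm stat},\lambda_+}_{{\cal L}^\infty_\lambda\to P(vN^{1/3})}-L^{{\rm stat},\lambda_+}_{{\cal L}^\infty_\lambda\to P(uN^{1/3})}.
\end{align*}
The right-hand inequality is exactly \eqref{eq:DownsideCoupling} with $\varrho=\lambda_+$ and $(x_1,x_2)=(uN^{1/3},vN^{1/3})$, and is implied by the exit-point condition $Z_\lambda(uN^{1/3})\leq Z^{{\rm stat},\lambda_+}(vN^{1/3})$; the left-hand inequality is \eqref{eq:UpsideCoupling} with $\varrho=\lambda_-$, implied by $Z^{{\rm stat},\lambda_-}(uN^{1/3})\leq Z_\lambda(vN^{1/3})$.

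The second step is to verify these two exit-point inequalities on $G_N(r)$. For this I would invoke the monotonicity of the exit points along the one-parameter family of endpoints $\{P(x)\}$: by a standard exchange-of-geodesics argument---the same crossing picture used to prove Lemma~\ref{CouplingLemma} and illustrated in Figure~\ref{FigCrossing}, which relies on a.s.\ unique argmax in LPP with continuous weights---the maps $x\mapsto Z_\lambda(x)$ and $x\mapsto Z^{{\rm stat},\lambda_\pm}(x)$ are monotone. Combined with the two events defining $G_N(r)$ at the extremes $x\in\{0,CN^{1/3}\}$, this monotonicity chains to deliver the required exit-point inequalities at every intermediate pair $(uN^{1/3},vN^{1/3})$ with $0\leq u<v\leq C$.

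The step I expect to require the most care is the exchange-of-geodesics argument itself: because $P(uN^{1/3})$ and $P(vN^{1/3})$ are not ordered in the componentwise partial order on $\Z^2$ (they lie on the anti-diagonal through $P$), the two competing geodesics have their starts and ends in ``opposite'' relative positions, and one must check that they share a common lattice vertex before the swap can be carried out. Once the crossing is established, the remaining exchange-and-subadditivity bookkeeping proceeds in the spirit of Cator--Pimentel~\cite{CP15b} and Pimentel~\cite{Pi17}.
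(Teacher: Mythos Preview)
Your proposal is correct and follows essentially the same route as the paper's proof: reduce to raw LPP increments, establish the monotonicity of the exit-point maps $x\mapsto Z_\lambda(x)$ and $x\mapsto Z^{{\rm stat},\lambda_\pm}(x)$ via the crossing/uniqueness-of-geodesics argument, chain the two inequalities in $G_N(r)$ through the intermediate points $uN^{1/3},vN^{1/3}$, and then invoke the two halves of Lemma~\ref{CouplingLemma}. The paper in fact derives the slightly stronger chains $Z_\lambda(vN^{1/3})\leq Z^{{\rm stat},\lambda_+}(uN^{1/3})$ and $Z^{{\rm stat},\lambda_-}(vN^{1/3})\leq Z_\lambda(uN^{1/3})$, which (together with monotonicity) imply your stated hypotheses for \eqref{eq:DownsideCoupling} and \eqref{eq:UpsideCoupling}; otherwise the arguments coincide.
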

\begin{proof}
To prove this result, we use Lemma~\ref{CouplingLemma}. We start by noting that for any two positive real numbers $a<b$, the following hold
\begin{align}\label{eq:ExitPointOderClaim}
Z_\lambda(a)\leq Z_\lambda(b)\quad \textrm{and} \quad Z^{{\rm stat},\lambda_\pm }(a)\leq Z^{{\rm stat},\lambda_\pm}(b)
\end{align}
with probability $1$.
The way we prove these inequalities is by contradiction. If any of the inequalities in \eqref{eq:ExitPointOderClaim} does not hold, then the corresponding last passage paths would intersect at some point (lets say $\mathbf{c}$) in between. In such case, the sections of the last passage paths intercepted between ${\cal L}_{\lambda}$ to $\mathbf{c}$ is actually the last passage path from ${\cal L}_{\lambda}$ to $\mathbf{c}$. If $a\neq b$, then this would imply the existence of two different last passage paths $\pi_{\mathcal{L}_{\lambda}\to \mathbf{c}}$ (or, $\pi^{\varrho}_{\mathcal{L}_{\lambda}\to \mathbf{c}}$) from $\mathcal{L}_{\lambda}$ to the point $\mathbf{c}$. As a consequence, the uniqueness of the last passage paths will be contradicted. Thus, the claim follows.

Now, notice that the condition $\mathbf{(i)}$ $Z_\lambda(CN^{1/3})\leq Z^{{\rm stat},\lambda_+}(0)$ implies
\begin{align}\label{eq:OneSidedImplication}
Z_\lambda(vN^{1/3})\leq Z_\lambda(CN^{1/3})\leq Z^{{\rm stat},\lambda_+}(0)\leq Z^{{\rm stat},\lambda_+}(uN^{1/3})
\end{align}
whereas the condition $\mathbf{(ii)}$ $Z^{{\rm stat},\lambda_-}(CN^{1/3})\leq Z_\lambda(0)$ implies
\begin{align}\label{eq:OtherSidedImplication}
Z^{{\rm stat},\lambda_-}(vN^{1/3})\leq Z^{{\rm stat},\lambda_-}(CN^{1/3})\leq Z_\lambda(0)\leq Z_\lambda(uN^{1/3}).
\end{align}
Employing Lemma~\ref{CouplingLemma}, we obtain the following inequalities
\begin{equation}
\begin{aligned}
 L^{{\rm resc},\lambda}_{N}(u)- L^{{\rm resc},\lambda}_{N}(v)& \leq B^{+}_{N,\lambda}(u) - B^{+}_{N,\lambda}(v),\\
 L^{{\rm resc},\lambda}_{N}(u)- L^{{\rm resc},\lambda}_{N}(v) &\geq B^{-}_{N,\lambda}(u)-B^{-}_{N,\lambda}(v)
\end{aligned}
\end{equation}
from \eqref{eq:OneSidedImplication} and \eqref{eq:OtherSidedImplication} respectively.
Hence, the proof is completed.
\end{proof}

\subsubsection{Proof of Proposition~\ref{Tightness}}
Now, we turn to proving the Proposition~\ref{Tightness}. For this, we closely follows the arguments for tightness in~\cite{Pi17}, Theorem~1.
First, we fix some large $C>0$. For any $\epsilon>0$, denote $A_\epsilon:= \{\max_{u_1,u_2\in[-C,C]} \left| L^{{\rm resc},\lambda}_{N}(u_1)- L^{{\rm resc},\lambda}_{N}(u_2)\right|\geq \epsilon\}$. It suffices to show that $\Pb(A_\epsilon )$ goes to $0$ as $N\to\infty$.
We have
\begin{align}\label{eq:SplittingProbability}
\Pb\left(A_\epsilon\right)\leq \Pb\left(A_\epsilon\cap G_N(r)\right) + \Pb((G_N(r))^c).
\end{align}
\emph{Bound on $\Pb\left(A_\epsilon\cap G_N(r)\right)$.}
 Whenever $G_N(r)$ holds, then by Proposition~\ref{SandwithLemma}, we have
 \begin{equation}\label{eq:Dominition}
 \begin{aligned}
& \max_{u_1,u_2\in[-C,C]} \left|L^{{\rm resc},\lambda}_{N}(u_1)- L^{{\rm resc},\lambda}_{N}(u_2)\right| \\
 \leq &\max\left\{\max_{u_1,u_2\in[-C,C]} \left|B^{+}_{N,\lambda}(u_1) - B^{+}_{N,\lambda}(u_2)\right|, \max_{u_1,u_2\in[-C,C]} \left|B^{-}_{N,\lambda}(u_1) - B^{-}_{N,\lambda}(u_2)\right|\right\}.
 \end{aligned}
 \end{equation}
The increments for the stationary case can be expressed, by Lemma~4.2 of~\cite{BCS06}, as
\begin{align}\label{eq:IIDIncrement}
L^{\lambda_\pm}_{{\cal L}_{\lambda}^\infty\to P(uN^{1/3})}- L^{\lambda_\pm}_{{\cal L}_{\lambda}^\infty\to P(0)}
\sim \left\{\begin{matrix}
\sum_{i=1}^{\lfloor uN^{1/3}\rfloor}p_i -\sum_{i=1}^{\lfloor uN^{1/3}\rfloor} q_i, & \text{for } u>0,\\
\sum_{i=\lfloor uN^{1/3}\rfloor}^{-1} q_{i} - \sum_{i=\lfloor uN^{1/3}\rfloor}^{-1}p_i, & \text{for } u<0,
\end{matrix}\right.
\end{align}
where $p_i$s (resp.\ $q_i$s) are i.i.d.\ $\mbox{Exp}(1-\lambda_{\pm})$ (resp.\ $\mbox{Exp}(\lambda_{\pm})$) random variables.

We have
\begin{equation}\label{eq5.92}
\E\left(L^{\lambda_\pm}_{{\cal L}_{\lambda}^\infty\to P(uN^{1/3})}- L^{\lambda_\pm}_{{\cal L}_{\lambda}^\infty\to P(0)}\right) =\frac{2\lambda_\pm-1}{\lambda_\pm(1-\lambda_\pm)} uN^{1/3}
\end{equation}
and $\frac{2\lambda_\pm-1}{\lambda_\pm(1-\lambda_\pm)} =\frac{2\lambda-1}{\lambda(1-\lambda)}+K(\lambda)rN^{-1/3}$ for $K(\lambda)=\frac{1-2\lambda+2\lambda^2}{(1-\lambda)^2\lambda^2}+\Or(N^{-1/3})$. Define
\begin{equation}\label{eq5.92b}
\mathcal{B}^{\pm}_{N,\lambda}(u)=\frac{L^{\lambda_\pm}_{{\cal L}_{\lambda}^\infty\to P(uN^{1/3})}- L^{\lambda_\pm}_{{\cal L}_{\lambda}^\infty\to P(0)}-\E\left(L^{\lambda_\pm}_{{\cal L}_{\lambda}^\infty\to P(uN^{1/3})}- L^{\lambda_\pm}_{{\cal L}_{\lambda}^\infty\to P(0)}\right)}{N^{1/6}}.
\end{equation}
Then by Donsker's theorem, we have
\begin{equation}\label{eq5.93}
\lim_{N\to\infty}\mathcal{B}^{\pm}_{N,\lambda}(u)\Rightarrow \mathcal{B}^{\pm}(u),
\end{equation}
where '$\Rightarrow$' denotes the convergence in distribution and $\mathcal{B}^{\pm}(\cdot)$ is a two Brownian motions (two sided). The slightly different centering in (\ref{eq5.92}) and (\ref{DefScaledLPPStat}) leads to
\begin{equation}\label{eq5.94}
N^{1/6} B^{\pm}_{N,\lambda}(u)=\mathcal{B}^\pm_{N,\lambda}(u)+K(\lambda)u r N^{-1/6}.
\end{equation}
This implies that as $N\to\infty$, $B^{\pm}_{N,\lambda}(u) \sim N^{-1/6} {\cal B}^\pm(u)$. Hence, $\max_{u_1,u_2\in[-C,C]} \left|B^{+}_{N,\lambda}(u_1) - B^{+}_{N,\lambda}(u_2)\right|$ and $\max_{u_1,u_2\in[-C,C]} \left|B^{-}_{N,\lambda}(u_1) - B^{-}_{N,\lambda}(u_2)\right|$ converge to $0$ in probability. This, together with \eqref{eq:Dominition} implies $\limsup_{N\to \infty}\Pb\left(A_\epsilon\cap G_N(r)\right)=0$ for any choice of $r$.

Using the bound on $\Pb((G_N(r))^c)$ of Proposition~\ref{CrucialEventIdentification}, we finally obtain $\lim_{r\to\infty}\limsup_{N\to\infty} \Pb\left(A_\epsilon\right)=0$.

\subsubsection{Proof of Proposition~\ref{ConcBoundOnMaximum}}
We show (\ref{eq:ExpectedResult1}) for $v=0$. The claim for general $v\in [-C,C]$ is simply obtained by replacing $N$ by $N+v N^{1/3}$ in the proof and noting that $(N+vN^{1/3})^{1/3}=N^{1/3}+\Or(N^{-1/3})$. For any $\e>0$, define the event \mbox{$\widetilde A_\e=\{\max_{u\in [-C,C]}|L^{{\rm resc},\lambda}_N(u)-L^{{\rm resc},\lambda}_N(0)|\geq \e\}$}. Then
\begin{equation}
\Pb(\widetilde A_\e)\leq \Pb((G_N(r))^c)+\Pb(\widetilde A_\e\cap G_N(r)).
\end{equation}
Applying Proposition~\ref{CrucialEventIdentification} with $r=N^{1/6-\delta/2}/\sqrt{C}$ leads to $\Pb((G_N(r))^c)\leq K^{\prime}_1e^{-K^{\prime}_2 N^{1/3-\delta}/C}$. Thus it remains to get a bound of $\Pb(\widetilde A_\e\cap G_N(r))$ for such choice of $r$.

By Proposition~\ref{SandwithLemma}, when the event $G_N(r)$ holds, then the increment of the (scaled) LPP are bounded by the ones of the stationary case with modified. This implies
\begin{equation}\label{eq5.96}
\Pb(\widetilde A_\e\cap G_N(r)) \leq \Pb\left(\max_{u\in [-C,C]} B^{+}_{N,\lambda}(u)\geq \e\right) + \Pb\left(\min_{u\in [-C,C]} B^{-}_{N,\lambda}(u)\leq -\e\right).
\end{equation}
Recall the relation (\ref{eq5.94}), namely $N^{1/6} B^{\pm}_{N,\lambda}(u)=\mathcal{B}^\pm_{N,\lambda}(u)+K(\lambda)u r N^{-1/6}$. By the above choice of $r$, the term $K(\lambda)u r N^{-1/6}\sim N^{-\delta/2}\to 0$ as $N\to\infty$. Therefore to bound (\ref{eq5.96}) it is enough to bound
\begin{equation}
\Pb\left(\max_{u\in[0,C]}\mathcal{B}^+_{N,\lambda}(u)\geq \e N^{1/6}\right),\quad\Pb\left(\max_{u\in[0,C]}-\mathcal{B}^-_{N,\lambda}(u)\geq \e N^{1/6}\right)
\end{equation}
together with the same bound for $u\in [-C,0]$. $\mathcal{B}^+_{N,\lambda}$ is a rescaled version of a sum of i.i.d.\ random centered variables with exponentially decaying tails, thus it is a (two-sided) martingale. Each of the bound is obtained in the same way, thus we write the details only for one of each.
By Doob's maximal inequality for the submartingale $u\mapsto e^{\chi \mathcal{B}^{+}_{N,\lambda}(u)}$, and minimizing over $\chi$, we get
\begin{equation}\label{eq:MaximalInequality}
\Pb(\max_{u\in[0,C]}\mathcal{B}^+_{N,\lambda}(u)\geq \e N^{1/6})\leq \inf_{\chi>0} \exp\left\{-\chi sN^{1/6}+\ln \E[\exp(\chi\mathcal{B}^{+}_{N,\lambda}(C))]\right\}.
\end{equation}
A simple computation gives
\begin{equation}
\ln \E[\exp(\chi\mathcal{B}^{+}_{N,\lambda}(C))] = 2 \chi^2 C N^{1/3} (\lambda(1-\lambda))^{-1}+\Or(1).
\end{equation}
Plugging this into \eqref{eq:MaximalInequality} and minimizing with respect to $\chi\geq 0$, we get
\begin{equation}
\textrm{l.h.s.~of}~(\ref{eq:MaximalInequality})\leq K_1 \exp(-K_2 \e^2 N^{1/3}/C),
\end{equation}
where $K_1,K_2$ are two constants which depend on $\lambda$. This bound and the one above the probability on $G_N(r)^c$ implies (\ref{eq:ExpectedResult1}).

Now we prove the inequality \eqref{eq:ExpectedResult2} for $u=0$. The inequality for general $u\in[-C,C]$ is obtained by substituting $\gamma$ with $\gamma+uN^{-2/3}$.
This time we are dealing with LPP between points which are``time-like'', i.e., where both coordinates of the end-point increase. However, since we are looking at a distance from the reference point $\bar P(0,0)$ which is the of the same order as the case of \eqref{eq:ExpectedResult1}, we can get a similar bound by using the analysis used to get \eqref{eq:ExpectedResult1} twice. Let us decompose the increments in \eqref{eq:ExpectedResult2} into a sum two increment the LPP between ``space-like'' points, i.e., with one coordinate increasing and the other decreasing. We have
\begin{equation}\label{eq5.101}
\begin{aligned}
L^{{\rm resc},\lambda}_N(0,v)-L^{{\rm resc},\lambda}_N(0,0) &= \left(L^{{\rm resc},\lambda}_N(0,v)-L^{{\rm resc},\lambda}_N((1-\gamma)v,v)\right) \\
&+ \left(L^{{\rm resc},\lambda}_N((1-\gamma)v,v)-L^{{\rm resc},\lambda}_N(0,0)\right).
\end{aligned}
\end{equation}
The first term in the r.h.s.~of~\eqref{eq5.101} is using \eqref{eq:ExpectedResult1}. For the second term in the r.h.s.~of~\eqref{eq5.101} we get the same bound by applying exactly the same strategy for getting \eqref{eq:ExpectedResult1}. Indeed, all the ingredients having two end-points used in the proof of \eqref{eq:ExpectedResult1} extends to points which are space-like: see Lemma~4.2 of~\cite{BCS06} and the geometric proof of Proposition~\ref{SandwithLemma}. The detailed proof do not require any new ideas, so we skip the details here.


\end{document}